\documentclass[letterpaper,12pt]{article}
\usepackage{amsmath, mathtools}
\usepackage{amssymb}
\usepackage{amsthm}
\usepackage{graphicx}
\usepackage{verbatim}
\usepackage{color}
\usepackage{url}
\newtheorem{theorem}{Theorem}

\newtheorem{lemma}{Lemma}

\newtheorem{prop}{Proposition}

\def\P{{\mathbb P}}     
\def\E{{\mathbb E}}     


\newcommand{\nocoal}{\mathcal{N\!C}}

\title{Species tree estimation under joint modeling of coalescence and duplication: sample complexity of quartet methods}

\author{
Max Hill\footnote{
Department of Mathematics, University of Wisconsin--Madison.} 
\and 
Brandon Legried\footnote{
Department of Mathematics, University of Wisconsin--Madison.} 
\and 
Sebastien Roch\footnote{
Department of Mathematics, University of Wisconsin--Madison. Corresponding author. Email: roch@math.wisc.edu. }
}

\date{
\today
}

\definecolor{Red}{rgb}{1,0,0}
\definecolor{Blue}{rgb}{0,0,1}

\begin{document}
	
	\maketitle
	
\begin{abstract}
We consider species tree estimation under a standard stochastic model of gene tree evolution that incorporates incomplete lineage sorting (as modeled by a coalescent process) and gene duplication and loss (as modeled by a branching process). Through a probabilistic analysis of the model, we derive sample complexity bounds for widely used quartet-based inference methods that highlight the effect of the duplication and loss rates in both subcritical and supercritical regimes.  
\end{abstract}

\section{Introduction}

Estimating phylogenies from the molecular sequences of existing species is a fundamental problem in computational biology that has been the subject of significant practical and theoretical work~\cite{SempleSteel:03,felsenstein2003inferring,gascuel2005mathematics,yang2014molecular,steelbook2016,warnow2017book,roch2019handson}. Rigorous statistical guarantees for inference methodologies often involve the probabilistic analysis of Markov models on trees. In particular, these analyses have uncovered deep connections with phase transitions in related statistical physics models~\cite{Mossel:03,mossel2003information,Mossel:04,Mossel:04a,steel2004cluster,BoChMoRo:06,roch2010toward,daskalakis2011bethe,mossel2011long,mihaescu2013learning,ane2017ornstein,roch2017likelihood,fan2018necessary,ganesh2019optimal}. 

In modern datasets, however,
phylogeny estimation is confounded by heterogeneity across the genome from processes such as incomplete lineage sorting (ILS), gene duplication and loss (GDL), lateral gene transfer (LGT), and others~\cite{Maddison1997}.  Inferred trees depicting the evolution of individual loci in the genome are referred to as \textit{gene trees}, while the tree representing the speciation history is called the \textit{species tree}.  Current sequencing technology allows phylogenetic estimates of species relationships for many genes, and a major challenge in reconstructing species trees is that gene trees often disagree for the reasons mentioned above. There is a burgeoning literature on the many ways of extracting speciation histories from collections of gene trees \cite{DegnanRosenberg:09,nakhleh2013computational,steelbook2016,warnow2017book,scornavacca:hal-02535070}.

In this phylogenomic context, the design and analysis of species tree estimation methods require the use of a variety of stochastic processes beyond Markov models on trees, including coalescent processes \cite{rannala2003bayes}, branching processes \cite{ArLaSe:09}, random subtree prune-and-regraft operations \cite{Galtier:07,linz2007horizontal,roch12lateral}, and tree mixtures  \cite{MatsenSteel:07,mossel2012mixtures}.  In fact, there is increasing realization in the phylogenetics community that ILS, GDL, LGT, etc.~should not be studied in isolation~\cite{RochWarnow:15,degnan2018hybridization,simion:hal-02535366,schrempf:hal-02535482} and, as a result, there has been a push to consider more complex models that combine many sources of uncertainty and discordance \cite{MENG200935,solislemus2016networks,rasmussen2012unified,dasarathy2015data,RochSteel:15,mossel2017tradeoff,longbranch2019,RoNuWa:19,allman2019logdet,allman2019nanuq,Li+:20,eulenstein2020unified}.  We study here a joint coalescent and branching process unifying ILS and GDL, as introduced in~\cite{rasmussen2012unified}.

Much is known about estimating species trees in the presence of ILS alone~\cite{rannala:hal-02535622}, as modeled by the multispecies coalescent (MSC) \cite{rannala2003bayes}. The latter model posits that, on a fixed species tree, gene trees evolve backwards in time on each branch according to the Kingman coalescent \cite{kingman1982coalescent} (see Section~\ref{section:background} for more details).  Bayesian approaches are a natural choice under such complex models of evolution~\cite{drummond2007beast}. However they do not scale well to large datasets and more computationally efficient procedures have been developed that combine inferred gene trees, sometimes referred to as summary methods~\cite{warnow2017book}. 

One such method is to deduce the species tree by a plurality vote across gene trees. Unfortunately, that approach is not statistically consistent, that is, it may not converge to the true species tree as the number of gene trees grows to infinity. Indeed, 
for unrooted species trees with more than four species, the most frequently occurring gene tree topology need not coincide with that of the species tree \cite{degnan2006}.  However, for every $4$-tuple of species---also referred to as quartet---and every locus, the most probable unrooted gene tree topology matches the species tree topology \cite{allman2011identifying}.  This implies in particular that the species tree topology is identifiable from the distribution of gene trees.  That is, different species tree topologies necessarily produce different gene tree distributions. Further, quartet-based algorithms for combining gene trees \cite{larget2010bucky,astral} are known to be statistically consistent.  Tight bounds on their sample complexity, that is, how many gene trees are needed to recover the true species tree with high probability, have also been established~\cite{Shekhar_2018}.

Less is known about estimating species trees in the presence of GDL alone. The model in \cite{ArLaSe:09} posits that, on a fixed species tree, the number of copies in a gene family evolves forward in time on each branch according to continuous-time branching process \cite{AtheryaNey:72} (see Section~\ref{section:background} for more details). 
Recently, the identifiability of the species tree in the presence of GDL alone was established in \cite{Legried821439} by showing that, similarly to the ILS alone case, for every quartet the most frequent unrooted gene tree topology matches that of the species tree.  As a result, quartet-based inference methods \cite{rabiee2019multi} were also shown to be statistically consistent. To date, no sample complexity results have been derived under this GDL model however. 

In this paper, we investigate the gene tree evolution model of \cite{rasmussen2012unified}, which unifies the multispecies coalescent and the branching process model of gene duplication and loss discussed above.  Given that quartet-based methods have strong guarantees under these models separately, it is natural to consider their performance under the joint model as well (see Section~\ref{section:background} for more details on the methods).  Numerical experiments in \cite{DHN-biorxiv2019,Legried821439} provide some evidence for the accuracy of certain quartet-based methods.  In \cite{eulenstein2020unified}, the authors give a proof of statistical consistency for one such method.  In our main result, we give the first known upper bounds on the sample complexity of species tree estimation methods under the joint effect of ILS and GDL.  Our proof, which highlights the somewhat counter-intuitive role played by the duplication and loss rates in the supercritical regime (see Section~\ref{section:background}), is complicated by the simultaneous forward-in-time/backward-in-time nature of the process. 

The rest of the paper is organized as follows. In Section~\ref{section:background}, after defining the model and inference methods, we state and discuss our results.  In Section~\ref{section:first-step}, we give a proof of identifiability including new quantitative estimates that play a role in our proof of sample complexity. The rest of the proof can be found in Section~\ref{section:sample}.

\section{Background and main results}
\label{section:background}

We first describe the model and then state our results formally.

\subsection{Problem and model}

Our input is a collection $\mathcal{T} = \{t_i\}_{i=1}^{k}$ of $k$ multi-labeled gene trees, given without estimation error. By multi-labeled, we mean that the same species can be associated to several leaves of a gene tree, corresponding to different copies of the same gene within the genome. Our goal is to output the unknown $n$-species tree $T$.

\paragraph{Model} We assume that the gene trees are generated under the DLCoal model~\cite{rasmussen2012unified}, a unified model of gene duplication, loss, and coalescence.  Gene trees are generated in two steps under DLCoal, which we now describe:

\begin{enumerate}
\item \textit{Locus tree: Birth-death process of gene duplication and loss with daughter edges.} The rooted $n$-species tree $T = (V,E)$ has vertices $V$ and directed edges $E$ with lengths $\{\eta_{e}\}_{e \in E}$.  Assume (to simplify) there is a single copy of each gene at the root of $T$.  Locus trees are generated by a top-down birth-death process~\cite{ArLaSe:09} within the species tree.  That is, on each edge, each gene copy independently duplicates at exponential rate $\lambda \geq 0$ and is lost at exponential rate $\mu \geq 0$.  Uniformly at random, one of the edges directly descending from the duplication is called a \textit{daughter} edge, and the other is called a \textit{mother} edge.  At speciation events, every surviving gene copy bifurcates and proceeds similarly in descendant edges.  Duplications and speciations are indicated in the locus tree by a bifurcation.  The resulting locus tree is then pruned of lost copies to give an observed rooted locus tree.  The rooted $n'$-individual locus tree $T_{n'} = (V_{n'},E_{n'},L_{n'})$ with edge lengths has labels $L_{n'}$ associated to each leaf of $T_{n'}$ from the species set $S$.

\item \textit{Gene tree: Coalescent process on a locus tree.} Gene trees are generated by a backward-in-time coalescent process~\cite{kingman1982coalescent,rannala2003bayes} within the locus tree.  The coalescent process begins with exactly one gene copy in each leaf of the locus tree.  Copies at the bottom of a directed edge in the locus tree undergo the Kingman coalescent process for a time equal to the length of the directed edge in coalescent time units, conditioned on the event that all gene copies at the bottom of any daughter edge in the locus tree necessarily coalesce underneath the top of the edge. 
Continuing along ancestral edges, this process eventually yields a gene tree.  The multi-copy gene trees $\{t_i\}_{i=1}^{k}$ are assumed independent and identically distributed.
This process is referred to as the \textit{bounded multi-species coalescent} (b-MSC) model in~\cite{rasmussen2012unified}.
\end{enumerate}

\paragraph{Species tree estimation methods} Next we describe two quartet-based species tree methods: ASTRAL-one and ASTRAL-multi~\cite{astral,DHN-biorxiv2019,rabiee2019multi}.  Both of these methods are practical variants of an intuitive idea (which in the ILS/GDL context is motivated by the results of~\cite{allman2011identifying,Legried821439}; see also Propositions~\ref{PropClaimBal} and~\ref{PropClaimCat} below): (1) for each quartet of species, find the most common topology across gene trees and (2) reconstruct an $n$-species tree that coincides with as many resulting quartet topologies as possible. 
The input is a collection of $k$ multi-labeled gene trees $\mathcal{T} = \{t_i\}_{i=1}^{k}$
Let $S$ be the set of $n$ species and $R$ be the set of $m$ labels (or gene copies).  The tree $t_i$ is labeled by the set $R_i \subset R$.  For any species tree $T$ labeled by $S$, the extended tree $T_{ext}$ labeled by $R$ is built by adding to each leaf of $T$ all gene copies corresponding to that species as a polytomy (i.e. as a vertex with degree possibly higher than $3$).  

Under ASTRAL-one, we pick one gene copy of each species uniformly at random and restrict the gene tree to these copies, producing a new gene tree $\tilde{t}_{i}$.  For any collection of gene copies $\mathcal{J} = \{a,b,c,d\}$, let $T^{\mathcal{J}}$ be the restriction of the tree $T$ to those copies.  Then the quartet score of every candidate species tree $\widetilde{T}$ is \begin{align*}
    Q_{k}\left(\widetilde{T}\right) = \sum_{i=1}^{k}\sum_{\mathcal{J} = \{a,b,c,d\} \subset R_i} \textbf{1}(\widetilde{T}_{ext}^{\mathcal{J}},\tilde{t}_{i}^{\mathcal{J}}),
\end{align*} 
where $\textbf{1}(T_1,T_2)$ is the indicator for the event that $T_1$ and $T_2$ have the same topology. ASTRAL-one selects the candidate tree $\widetilde{T}$ that maximizes that score.

ASTRAL-multi treats copies of a gene in species as multiple alleles within the species.  So, we do not replace $t_{i}$ with any restricted gene tree $\tilde{t}_{i}$. The quartet score of $T$ with respect to $\mathcal{T}$ is \begin{align}
    \nonumber Q_{k}\left(\widetilde{T}\right) = \sum_{i=1}^{k}\sum_{\mathcal{J} = \{a,b,c,d\} \subset R_i} \textbf{1}(\widetilde{T}_{ext}^{\mathcal{J}},t_i^{\mathcal{J}}),
\end{align} where $T_1^{\mathcal{J}}$ is the restriction of $T_1$ to individuals $\mathcal{J}$.   
The candidate tree $\widetilde{T}$ that maximizes this score is chosen by ASTRAL-multi.  

Both procedures can be performed in exact mode or in a default constrained mode, which restricts the number of candidate species trees to those displaying the bipartitions in the given gene trees.  

\subsection{Statement of main results}

We present a theoretical bound on the number of gene trees needed for ASTRAL-one to reconstruct the model species tree with high probability under the DLCoal model.  Similarly to the case of the MSC model~\cite{Shekhar_2018}, the sample complexity depends on the length of the shortest species tree branch in coalescent time units.  We denote this length by $f$.
However we also highlight the influence of other relevant parameters in the sample complexity: the depth of the species tree, $\Delta$; the duplication and loss rates, $\lambda$ and $\mu$.  For simplicity, we assume throughout that $\mu \neq \lambda$.
\begin{theorem}[Main result: Sample complexity of ASTRAL-one]
\label{TheoremSampleONE}
Consider a model species tree whose minimum branch length $f$ is finite and assume gene trees are generated under the DLCoal model.  Then, for any $\epsilon > 0$, there are universal positive constants $C, C'$ such that the exact version of ASTRAL-one returns the true species tree with probability at least $1 - \epsilon$ if the number of input error-free gene trees satisfies:
\begin{align}
    k
    \geq 
    C' \, 
    \frac{1}{f^2}\, 
    \frac{e^{C |\mu - \lambda|\Delta}}{\left(1 -  \frac{\lambda}{\mu} \land \frac{\mu}{\lambda} \right)^{C}}\, 
    \log \frac{n}{\epsilon}. \label{eq:complexity}
\end{align} 
\end{theorem}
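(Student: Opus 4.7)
The plan is to adapt the quartet-plus-concentration paradigm used in~\cite{Shekhar_2018} for the MSC-alone case: (i) at every four-species subset, establish a quantitative lower bound $\gamma_0$ on the margin between the probability of the correct restricted gene tree topology and that of the two incorrect ones; (ii) use Hoeffding's inequality to show that the empirical majority topology on each quartet agrees with the truth with probability at least $1 - 2\exp(-ck\gamma_0^2)$; (iii) union-bound over the $\binom{n}{4}$ quartets; and (iv) observe that in exact mode ASTRAL-one returns $T$ as soon as the empirical majority is correct on every quartet, since the species tree is the unique object simultaneously displaying all true quartet topologies.

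Fix a quartet $q = \{a,b,c,d\}$ and write $p^\star(q)$ for the probability that the restricted gene tree $\tilde t_1^{q}$---produced after ASTRAL-one's uniform sampling of one copy per species---has the same unrooted topology as $T^{q}$, and $p_1(q), p_2(q)$ for the probabilities of the two alternative topologies. Propositions~\ref{PropClaimBal} and~\ref{PropClaimCat} establish $p^\star(q) > \max\{p_1(q), p_2(q)\}$, but the sample complexity argument requires the quantitative strengthening $\gamma(q) := p^\star(q) - \max\{p_1(q), p_2(q)\} \ge \gamma_0$ uniform in $q$. Steps (ii)--(iv) then yield $k \gtrsim \gamma_0^{-2}\log(n/\epsilon)$, so matching the right-hand side of~\eqref{eq:complexity} amounts to proving $\gamma_0 \gtrsim f \cdot e^{-C|\mu-\lambda|\Delta/2}\bigl(1 - \tfrac{\lambda}{\mu}\wedge\tfrac{\mu}{\lambda}\bigr)^{C/2}$.

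To extract such a quantitative margin from the two-level DLCoal structure I would condition on the locus tree and on the identities of the sampled copies. Restricted to the event $\mathcal{E}$ that the four sampled copies lie on a ``clean'' species-tree backbone inside the locus tree---one copy per species, each descending without ever being routed through a daughter subtree born from a duplication---the b-MSC reduces to an ordinary MSC on the species tree and one recovers the classical margin of order $f$ governed by the shortest species branch. Contributions from the complementary event must be shown not to wipe out this margin, which is where the identifiability analysis of Section~\ref{section:first-step} (applied uniformly over the random locus tree) is needed. It then remains to lower-bound $\Pr[\mathcal{E}]$ by a product of two birth-death ingredients: a survival-probability estimate along each species edge yielding the factor $(1 - \lambda/\mu \wedge \mu/\lambda)^{C}$---the extinction probability of a supercritical linear birth-death process is $\mu/\lambda$ and the normalized survival mass of a subcritical one is $1 - \lambda/\mu$---and a first-moment control on the size of the locus tree yielding the factor $e^{-C|\mu-\lambda|\Delta}$ through the uniform sampling step.

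The main obstacle is the simultaneous forward-in-time/backward-in-time nature of the two processes: the birth-death process producing the locus tree runs from root to leaves, while the coalescent producing the gene tree runs from leaves to root, and the daughter/mother conditioning (daughter subtrees must coalesce before their tops) couples them across time directions. Conditioning on the locus tree decouples the two processes, but introduces a random, possibly very large, object into the analysis; in the supercritical regime the locus tree has size of order $e^{(\lambda-\mu)\Delta}$, so a uniform sample is unlikely to land on the species backbone, and carrying this control uniformly over the realization of the locus tree is what produces the somewhat counter-intuitive exponential blow-up in~\eqref{eq:complexity} advertised in the introduction.
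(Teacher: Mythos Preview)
Your high-level plan (Hoeffding on each quartet, union bound, then invert the margin) matches the paper, but the way you propose to extract the quantitative margin $\gamma_0$ has a real gap. You want to isolate a ``clean backbone'' event $\mathcal{E}$ on which the model reduces to the ordinary MSC, harvest a margin of order $f$ there, and then argue that the complement $\mathcal{E}^c$ contributes non-negatively by appealing to Section~\ref{section:first-step}. But the identifiability analysis in Propositions~\ref{PropClaimBal} and~\ref{PropClaimCat} is organized around conditioning on $(\mathcal{X},I)$ and then decomposing over the ancestral locus configurations $E,F_{\bullet},G_{\bullet},H_{\bullet},K$ at $R$; it is precisely this decomposition that yields term-by-term non-negativity. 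Your event $\mathcal{E}$ cuts across that structure (it depends on the entire path of the sampled copies through the locus tree, not just their configuration at $R$), so Section~\ref{section:first-step} does not directly give $\gamma(q\mid\mathcal{E}^c)\ge 0$. The paper instead exploits the proof of Proposition~\ref{PropClaimBal} to drop all non-$K$ terms and lower-bound the margin by $\P'_i[K\cap\nocoal^c]$, which it then controls via the concrete coalescence event $\mathcal{C}_{ab}$ (Lemma~\ref{lemma:cab}) and Jensen's inequality on $1/I^3$ (Lemma~\ref{lemma:deltap}). This is where the factors $\gamma\wedge\tfrac18$, $\sigma^3$, and $\alpha^{-3}$ actually come from; your sketch produces the right \emph{shape} of the answer but not through a decomposition that can be closed.

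There is a second omission: some gene trees have zero copies in one of the four species, and those contribute no vote to the quartet. Your Hoeffding step treats all $k$ gene trees as i.i.d.\ Bernoulli voters, but the effective number of voters for quartet $\mathcal{Q}$ is the random $|\mathcal{K}_{\mathcal{Q}}|$. The paper handles this with a separate concentration step (Lemma~\ref{lemma:kstar}) ensuring $|\mathcal{K}_{\mathcal{Q}}|\ge k^\star$ for all quartets simultaneously, which injects an additional $1/\sigma$ factor into the final bound. Without this, your constant $C$ would be off and the argument would be incomplete in the subcritical regime where extinction is the dominant obstruction.
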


Somewhat surprisingly the subcritical ($\mu > \lambda$) and supercritical ($\mu < \lambda$) regimes exhibit a similar behavior. Indeed one naturally expects a higher sample complexity in the subcritical case as $\mu/\lambda$ becomes large because the absence of any gene copy in a species becomes more likely and leads to the need for more gene trees in order to extract signal. That prediction is borne out in~\eqref{eq:complexity}. However the sample complexity similarly increases in the supercritical regime as $\lambda/\mu$ becomes large.   As the proof shows, the reasons for this behavior are different in that regime. They have to do with the fact that a large number of copies at the most recent common ancestor of a species quartet tends to produce large numbers of conflicting gene tree quartet topologies, thereby obscuring the signal.  It is an open problem whether other inference methods (perhaps not based on quartets) are less sensitive to this last phenomenon.

Our proof of Theorem~\ref{TheoremSampleONE} involves a detailed probabilistic analysis of the DLCoal model. Along the way, we prove other results of interest.
First, we show that the unrooted species tree is identifiable from the distribution of multi-labeled gene trees $\mathcal{T}$ under the DLCoal model over $T$.  Formally, we show that two distinct unrooted species trees produce different gene tree distributions.  The result is a generalization of~\cite[Theorem 1]{Legried821439}, where only GDL is considered. Theorem~\ref{TheoremDLCoalIdent} was first claimed in~\cite{eulenstein2020unified}. Our novel contribution here lies in the proofs of Propositions~\ref{PropClaimBal} and~\ref{PropClaimCat} below, which give a quantitative version of the identifiability result and play a role in deriving the sample complexity of ASTRAL-one. 
\begin{theorem}[Identifiability of species tree] 
\label{TheoremDLCoalIdent}
	Let $T$ be a model species tree with at least $n \geq 4$ leaves.  Then $T$, without its root, is identifiable from the distribution of gene trees $\mathcal{T}$ under the DLCoal model over $T$.
\end{theorem}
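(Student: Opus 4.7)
The plan is to reduce identifiability of the full unrooted species tree to identifiability of the induced unrooted quartet topology on every four-element subset of species, and then invoke Propositions~\ref{PropClaimBal} and~\ref{PropClaimCat} which handle the balanced and caterpillar rooted four-taxon shapes respectively.

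First, I would recall the classical fact that an unrooted phylogenetic tree on $n \geq 4$ labeled leaves is uniquely determined by the collection of its induced unrooted quartet topologies (see, e.g., \cite{SempleSteel:03}). Thus it suffices to show that, for every four-element subset $Q = \{a,b,c,d\} \subset S$, the unrooted topology of the restriction $T|_Q$ is recoverable from the distribution of the multi-labeled gene trees $\mathcal{T}$.

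Second, I would argue that the restricted problem is again a DLCoal problem on four taxa. Because the GDL branching process acts independently on each edge and each copy, and because coalescences in the bounded multi-species coalescent are computed independently along locus-tree edges, pruning each sampled gene tree to those copies whose species labels lie in $Q$ yields a random tree with the same distribution as a DLCoal gene tree on the species subtree $T|_Q$, with edge lengths inherited from $T$ (in particular, lengths along maximal paths of $T$ that project onto a single edge of $T|_Q$ are summed). This reduces the identification task to the four-taxon case. To extract the unrooted topology of $T|_Q$ from this four-taxon DLCoal distribution, I would condition on the strictly positive-probability event that each species in $Q$ is represented by at least one leaf in the gene tree, sample one leaf uniformly per species, and record the unrooted topology of the resulting four-leaf gene tree. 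Propositions~\ref{PropClaimBal} and~\ref{PropClaimCat} — covering the case where the rooted species subtree $T|_Q$ is balanced and the case where it is a caterpillar, respectively — assert that under this construction the probability of matching the true unrooted species quartet strictly exceeds the probabilities of either of the two alternative topologies. Hence the true quartet is the unique argmax among three observable functionals of the gene tree distribution and is identifiable.

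Assembling the quartet topologies over all four-subsets of $S$ and applying the all-quartets reconstruction above recovers the unrooted species tree $T$. The main obstacle is the proof of Propositions~\ref{PropClaimBal} and~\ref{PropClaimCat} themselves: one must couple the forward-in-time GDL branching dynamics with the backward-in-time bounded multi-species coalescent on the resulting locus tree, and show that in both the subcritical and supercritical regimes — despite potentially very large copy counts ancestral to the quartet — the induced quartet distribution still strictly favors the species topology. This quantitative analysis, which also drives the sample complexity bound of Theorem~\ref{TheoremSampleONE}, is deferred to Section~\ref{section:first-step}.
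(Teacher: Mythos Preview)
Your proposal is correct and takes essentially the same approach as the paper: reduce identifiability of the unrooted tree to identifiability of every induced quartet, then invoke Propositions~\ref{PropClaimBal} and~\ref{PropClaimCat} (together with the $Q_2\leftrightarrow Q_3$ symmetry obtained by swapping $c$ and $d$, and an expectation over $I,\mathcal{X}$) to conclude that the matching unrooted quartet topology is the strict mode of the sampled four-leaf gene tree. One caveat worth flagging: the paper does not assert---and does not need---your claim that the $\mathcal{Q}$-restricted gene tree is \emph{exactly} distributed as a DLCoal gene tree on the four-taxon species tree $T|_\mathcal{Q}$; the Propositions are proved directly for the restriction $t^{\mathcal{Q}}$ of a gene tree generated on the full $T$, and your reduction step is in fact delicate because the daughter-edge conditioning in the b-MSC can involve lineages from species outside $\mathcal{Q}$, so it is safest to drop that step and appeal to the Propositions as stated.
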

\noindent The identifiability result is established by showing that, for each quartet in the species tree, the most likely gene tree matches the species tree.
As in \cite{Legried821439,eulenstein2020unified}, a direct consequence of this proof is the statistical consistency of the ASTRAL-one.
\begin{theorem}[Consistency of ASTRAL-one] \label{TheoremDLCoalONE}
As the number of input gene trees tends toward infinity, the output of ASTRAL-one converges to $T$ almost surely, when run in exact mode or in its default constrained version.
\end{theorem}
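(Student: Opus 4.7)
The plan is to exploit that the per-gene quartet score is a sum of i.i.d.\ bounded indicators, so the strong law of large numbers gives $Q_k(\widetilde{T})/k \to \mathbb{E}[Q_1(\widetilde{T})]$ almost surely for every candidate tree $\widetilde{T}$. The key step is then to show that this expectation is strictly maximized by the true species tree $T$, after which a finite union bound over candidate trees yields the claim.

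First I would decompose $\mathbb{E}[Q_1(\widetilde{T})]$ as a sum over $4$-subsets of the species set $S$. Since one copy per species is chosen uniformly at random to form $\tilde{t}_i$, every species contributes exactly one representative, so the only relevant quartets are quartets of species; for each quartet $\{a,b,c,d\} \subset S$, the probability that the restricted gene tree $\tilde{t}_i^{\{a,b,c,d\}}$ displays the topology $\widetilde{T}_{ext}^{\{a,b,c,d\}}$ is a function of $T|_{\{a,b,c,d\}}$ alone. By Propositions~\ref{PropClaimBal} and~\ref{PropClaimCat} (equivalently, by the identifiability Theorem~\ref{TheoremDLCoalIdent}), on each species quartet the restricted gene tree matches the true species tree quartet with strictly higher probability than either alternative unrooted topology.

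Now let $\widetilde{T} \neq T$ be any other unrooted $n$-species tree. Since an unrooted binary tree is determined by the collection of its induced quartet topologies, there exists at least one quartet $\{a,b,c,d\}$ on which $\widetilde{T}_{ext}$ and $T$ disagree. On that quartet the contribution to $\mathbb{E}[Q_1(T)]$ strictly exceeds the contribution to $\mathbb{E}[Q_1(\widetilde{T})]$ by the strict inequality just noted, while on every other quartet the contributions coincide or still favor $T$. Hence $\mathbb{E}[Q_1(T)] > \mathbb{E}[Q_1(\widetilde{T})]$. Applying the strong law of large numbers to each of the finitely many candidate trees and taking a union bound, almost surely we have $Q_k(T) > Q_k(\widetilde{T})$ simultaneously for all $\widetilde{T} \neq T$ once $k$ is large enough; thus ASTRAL-one in exact mode returns $T$ eventually a.s.

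The main technical wrinkle, which I expect to be the most delicate part, is the default constrained mode, whose candidate set consists only of species trees displaying bipartitions that appear in the input gene trees. To reduce to the exact mode argument it suffices to show that $T$ itself eventually lies in the candidate set a.s. For this I would argue that, under the DLCoal model, each individual restricted gene tree $\tilde{t}_i$ equals $T$ (as a topology) with some strictly positive probability, e.g.\ by considering the positive-probability event that no duplication or loss occurs along $T$ and that the bounded multi-species coalescent produces the species tree topology on each branch. An application of the second Borel-Cantelli lemma (to the i.i.d.\ events that $\tilde{t}_i$ displays a given bipartition of $T$) then ensures that every bipartition of $T$ is witnessed in some $\tilde{t}_i$ eventually a.s. Intersecting with the almost sure event from the previous paragraph completes the proof.
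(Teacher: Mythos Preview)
Your proposal is correct and follows essentially the same approach the paper takes: the paper's proof is a one-line invocation of the law of large numbers (citing \cite[Theorem 2]{Legried821439}) applied after Propositions~\ref{PropClaimBal} and~\ref{PropClaimCat} establish the strict quartet inequality, and you have simply spelled out those details, including the Borel--Cantelli argument for the constrained mode. One minor remark: your claim that the per-quartet probability ``is a function of $T|_{\{a,b,c,d\}}$ alone'' is not needed for the argument and may not be literally true under DLCoal (the locus tree is generated on the full species tree), but since you only use the conclusions of Propositions~\ref{PropClaimBal} and~\ref{PropClaimCat} this does not affect the validity of your proof.
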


We use a similar reasoning to prove the consistency of ASTRAL-multi.  This result is new.
\begin{theorem}[Consistency of ASTRAL-multi] \label{TheoremDLCoalMulti}
As the number of input gene trees tends toward infinity, the output of ASTRAL-multi converges to $T$ almost surely, when run in exact mode or in its default constrained version.
\end{theorem}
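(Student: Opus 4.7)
The plan is to parallel the proof of Theorem~\ref{TheoremDLCoalONE}, adjusted for the fact that ASTRAL-multi sums an indicator over \emph{every} 4-tuple of gene copies rather than restricting each gene tree to one uniformly chosen copy per species. By the strong law of large numbers applied to the i.i.d.\ collection $\{t_i\}$, for every candidate species tree $\widetilde{T}$,
\begin{align*}
\frac{1}{k}\, Q_k(\widetilde{T}) \;\xrightarrow{\mathrm{a.s.}}\; \E\!\left[q_1(\widetilde{T})\right],
\qquad q_1(\widetilde{T}) := \sum_{\mathcal{J} \subset R_1,\ |\mathcal{J}|=4} \mathbf{1}\!\bigl(\widetilde{T}_{ext}^{\mathcal{J}},\, t_1^{\mathcal{J}}\bigr).
\end{align*}
Since the candidate set is finite (immediately so in exact mode, and eventually so almost surely in constrained mode once every bipartition of $T$ has appeared in some input gene tree), it will suffice to show that $T$ is the \emph{unique} maximizer of $\widetilde{T}\mapsto \E[q_1(\widetilde{T})]$.

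The first step is to decompose $q_1(\widetilde{T})$ by the number of distinct species represented in $\mathcal{J}$. If two or more copies in $\mathcal{J}$ come from the same species, then $\widetilde{T}_{ext}^{\mathcal{J}}$ has a polytomy at that species and is not a resolved quartet, so the indicator depends on $t_1$ but not on $\widetilde{T}$; such terms contribute the same expected amount to every candidate score and may be discarded. What remains are 4-tuples $\mathcal{J}=\{a,b,c,d\}$ at four distinct species $\{A,B,C,D\}$, for which $\widetilde{T}_{ext}^{\mathcal{J}}$ inherits the resolved topology $Q(\widetilde{T})$ of $\widetilde{T}$ restricted to $\{A,B,C,D\}$. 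Grouping by species quartet, the problem reduces to showing
\begin{align*}
\E\!\left[X_{Q(T)}^{A,B,C,D}\right] > \E\!\left[X_{Q'}^{A,B,C,D}\right], \qquad Q'\in\{AB|CD, AC|BD, AD|BC\}\setminus\{Q(T)\},
\end{align*}
where $Q(T)$ is the topology of $T$ on $\{A,B,C,D\}$ and $X_Q^{A,B,C,D}$ counts the 4-tuples $(a,b,c,d)\in A\times B\times C\times D$ of copies in $R_1$ with $t_1^{\{a,b,c,d\}}=Q$.

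The key step is to prove this comparison by conditioning on the locus tree $L_1$: given $L_1$,
\begin{align*}
\E\!\left[X_Q^{A,B,C,D}\,\big|\,L_1\right] = \sum_{(a,b,c,d) \in A\times B\times C\times D} \P\!\left(t_1^{\{a,b,c,d\}}=Q \,\big|\, L_1\right),
\end{align*}
so it suffices to show that each individual b-MSC quartet probability on the right is maximized by $Q=Q(T)$. This is exactly the content of the case analysis underlying Propositions~\ref{PropClaimBal} and~\ref{PropClaimCat}, which computes the b-MSC quartet probabilities by conditioning on the relevant portion of the locus tree; the final averaging over a uniform copy per species in those statements is a presentational convenience that can be removed. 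The main obstacle is therefore to verify that those proofs indeed yield the pointwise-in-$L_1$ inequality rather than merely its average over copy assignments. Once this is in hand, summing over copy 4-tuples and species quartets and then taking expectation over $L_1$ gives the desired strict inequality, with strictness following from the positive-probability configuration in which each of $A,B,C,D$ has exactly one copy, where b-MSC reduces to the classical MSC and the quartet argument of~\cite{allman2011identifying} applies.
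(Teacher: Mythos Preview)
Your reduction to showing, for each species quartet $\{A,B,C,D\}$ and each alternative topology $Q'$, that $\E[X_{Q(T)}^{A,B,C,D}] > \E[X_{Q'}^{A,B,C,D}]$ is correct and coincides with the paper's reduction to $\E[\mathcal{N}_{AB|CD}] > \max\{\E[\mathcal{N}_{AC|BD}],\E[\mathcal{N}_{AD|BC}]\}$. The decomposition discarding quartets with repeated species is also fine.

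The gap is in how you propose to establish that inequality. The pointwise-in-$L_1$, per-copy-choice claim
\[
\P\!\left[t_1^{\{a,b,c,d\}} = Q(T)\,\middle|\,L_1\right] \;\geq\; \P\!\left[t_1^{\{a,b,c,d\}} = Q'\,\middle|\,L_1\right]
\]
is \emph{false}, and the averaging over a uniform copy per species in Propositions~\ref{PropClaimBal} and~\ref{PropClaimCat} is not a presentational convenience but the heart of the argument. Counterexample on the balanced quartet $AB|CD$: let the locus tree have a duplication on the edge above $R$ producing a daughter lineage $\ell_1$ and a mother lineage $\ell_2$, each surviving in all four species. Choose $a,c$ to be $\ell_1$-descendant copies in $A,C$ and $b,d$ to be $\ell_2$-descendant copies in $B,D$. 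The b-MSC daughter-edge constraint forces the $a$ and $c$ lineages to coalesce before the duplication, so the gene-tree quartet is $ac|bd$ with probability $1$ given $L_1$. More generally, once $(a,b,c,d)$ and $L_1$ are fixed, the events $i_a=i_b$, $i_c=i_d$, etc.\ are deterministic, and the quantities $x=\P''[i_a=i_b]$, $y=\P''[i_c=i_d]$ together with all the symmetry reductions in Lemmas~\ref{LemmaAncBal}--\ref{LemmaSymmetryBal} collapse; the proofs of the propositions simply do not survive removal of the uniform-copy randomness.

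The fix is short and is exactly what the paper does. Condition on $\mathcal{X}=(\mathcal{A},\mathcal{B},\mathcal{C},\mathcal{D})$ and let $\mathcal{M}=\mathcal{A}\mathcal{B}\mathcal{C}\mathcal{D}$. Because a uniformly random choice of one copy per species puts equal mass on each of the $\mathcal{M}$ quadruples,
\[
\E\!\left[X_{Q}^{A,B,C,D}\,\middle|\,\mathcal{X}\right] \;=\; \mathcal{M}\cdot \P[\,q = Q \mid \mathcal{X}\,],
\]
where $q$ is the quartet topology of the uniform choice. Taking expectation over $I$ in Propositions~\ref{PropClaimBal} and~\ref{PropClaimCat} gives $\P[q=Q(T)\mid\mathcal{X}] > \P[q=Q'\mid\mathcal{X}]$ on $\{\mathcal{X}\geq\vec{1}\}$, hence $\E[X_{Q(T)}\mid\mathcal{X}] > \E[X_{Q'}\mid\mathcal{X}]$ there (and both sides vanish otherwise); now take expectation over $\mathcal{X}$. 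In short, you should link the multi-copy count to the uniform-choice probability via the factor $\mathcal{M}$ rather than attempt a per-copy comparison.
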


\section{First step: a proof of identifiability of the species tree under the DLCoal model}
\label{section:first-step}

The unrooted topology of a species tree is defined by its quartets (see, e.g., \cite{SempleSteel:03}).  Let $\mathcal{Q} = \{A, B, C, D\}$ and assume, without loss of generality, that $T^{\mathcal{Q}}$ has quartet topology $AB|CD$. 
Let $t$ be a 
gene tree generated under the DLCoal model on $T$ and let  $t^{\mathcal{Q}}$ be its restriction to the gene copies from the species in $\mathcal{Q}$.  The high-level idea behind our proof of Theorem~\ref{TheoremDLCoalIdent} is the following: 
\begin{quote}
	\textit{Conditioning on the number of copies in species $A,B,C,D$ in the species in $\mathcal{Q}$, independently pick a uniformly random gene copy $a,b,c,d$ in species $A,B,C,D$ and let $q$ be the corresponding quartet topology under $t^{\mathcal{Q}}$. We show that the most likely outcome is $q = ab|cd$.} 
\end{quote} 
This is the same approach as that used in~\cite{Legried821439}, but the analysis of the model is more involved.

Define $\mathcal{X} = (\mathcal{A},\mathcal{B},\mathcal{C},\mathcal{D})$ to be the number of copies in species $A,B,C,D$, respectively.  We will let $\P'$ be the probability measure subject to this conditioning.  While we only reconstruct an unrooted species tree, both locus trees and gene trees are in fact generated from rooted species trees. On a restriction to four species, there are \textit{two cases} of root location to consider: when the root is located on the internal quartet edge (the balanced case) or on the pendant edge incident with $D$ (the caterpillar case).  For gene copies $a,b,c,d$ from $A, B, C, D$, define the following events:  
\begin{align*}
Q_1 = \{q = ab|cd\}, \quad
Q_2 = \{q = ac|bd\}, \quad
Q_3 &= \{q = ad|bc\}.
\end{align*}

\subsection{Balanced case}

Let $R$ be the most recent common ancestor of $\mathcal{Q}$ in the species quartet $T^{\mathcal{Q}}$ and $I$ be the number of locus copies exiting $R$ (forward in time).  Let $\P''$ be the probability measure indicating conditioning on $I$ as well as $\mathcal{A},\mathcal{B},\mathcal{C},\mathcal{D}$.  For any selection of copies $(a,b,c,d)$ from each species in the quartet, let $i_x \in \{1,...,I\}$ be the ancestral lineage of $x \in \{a,b,c,d\}$ in $R$.
By the law of total probability, we have 
\begin{align} 
\label{EqnTotalProb''}
    \P[Q_1] = \E[\P''[Q_1]].
\end{align}  
Hence, in order to show identifiability of the species quartet, it is sufficient to show that 
\begin{align*} 
    \P[Q_1] > \max\{\P[Q_2],\P[Q_3]\}
\end{align*} 
when the copies of $(A,B,C,D)$ are chosen uniformly at random.  If $\mathcal{X} < \vec{1}$ (that is, at least one of $(A,B,C,D)$ fails to have a copy to select), then ASTRAL-one selects $Q_1,Q_2,Q_3$ each with probability $0$. So we consider the case $\mathcal{X} \geq \vec{1}$. We will use the notation $z_1 \land z_2 = \min\{z_1, z_2\}$.
\begin{prop}[Quartet identifiability: Balanced case] \label{PropClaimBal} 
Let $x = \P''[i_a = i_b]$ and $y = \P''[i_c = i_d]$.
On the events $\mathcal{X} \geq \vec{1}$ and $I \geq 1$,
we have almost surely 
\begin{align*}
    \P''[Q_1] - \P''[Q_2] > \frac{1}{12}\left(x - \frac{1}{I}\right) \wedge \left(y - \frac{1}{I}\right).
\end{align*}
\end{prop}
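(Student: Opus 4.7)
The plan is to decompose $\P''[Q_1] - \P''[Q_2]$ by conditioning on the partition $\pi$ of $\{a,b,c,d\}$ induced at $R$ by the equivalence relation $x \sim y \iff i_x = i_y$. The starting observation is that on the restriction of the gene tree to $\{a,b,c,d\}$, the event $Q_1 = \{ab|cd\}$ is equivalent to ``the first coalescence partner of $a$'s gene lineage is $b$'s'' (once $\{a,b\}$ are merged, no subsequent coalescence can separate them into different halves of the unrooted quartet), and similarly for $Q_2, Q_3$. The problem thus reduces to comparing which of $b, c, d$ is $a$'s first coalescence partner.

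First I would compute the partition probabilities $\P''[\pi]$. The independence of the $AB$- and $CD$-side evolutions under $\P''$ gives independence of $(i_a,i_b)$ and $(i_c,i_d)$, and exchangeability of the $I$ root lineages gives uniform marginals; together these yield $\P''[i_a=i_b]=x$, $\P''[i_c=i_d]=y$, and $\P''[i_a=i_c]=\P''[i_a=i_d]=\P''[i_b=i_c]=\P''[i_b=i_d]=1/I$, from which every partition probability follows in closed form. Next I would classify $\pi$ by the sign of its conditional bias $\Delta(\pi) := \P''[Q_1 \mid \pi] - \P''[Q_2 \mid \pi]$. A case analysis shows $\Delta(\pi)$ can be nonzero only when $\pi$ is a 2+2 partition or a 3-block partition that pre-groups exactly one pair: $\Delta(\pi) > 0$ for $\{a,b\}$ or $\{c,d\}$ (favoring $Q_1$), $\Delta(\pi) < 0$ for $\{a,c\}$ or $\{b,d\}$ (favoring $Q_2$), and $\Delta(\pi) = 0$ for $\{a,d\}$ or $\{b,c\}$ (favoring $Q_3$, which treats $Q_1$ and $Q_2$ symmetrically). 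The remaining types --- the single block $\{abcd\}$, four singletons, and the four 3+1 partitions such as $\{abc,d\}$ --- also give $\Delta(\pi) = 0$ by a label-swap that preserves $\pi$ but maps $Q_1 \leftrightarrow Q_2$ (for example, $b \leftrightarrow c$ for $\{abc,d\}$).

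For the $Q_1$-favoring scenarios I would then obtain quantitative lower bounds on $\Delta(\pi)$ via a racing argument. In the $\{ab,cd\}$ partition, let $\alpha = i_a = i_b$ and $\beta = i_c = i_d$; the within-lineage coalescences of $(a,b)$ in $\alpha$ and of $(c,d)$ in $\beta$ each proceed at rate $1$, racing the backward merger time $T_{\alpha\beta}$ of $\alpha$ and $\beta$ above $R$. Tracking the four cases (both pairs coalesce before the merger, exactly one does, or neither does), any pre-merger pair-coalescence forces $Q_1$, giving $\Delta(\{ab,cd\}) = 1 - \E[e^{-2T_{\alpha\beta}}] > 0$. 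An analogous three-way race for the $\{ab,c,d\}$ partition gives $\Delta(\{ab,c,d\}) > 0$. Combining these with the partition-probability formulas shows that the excess of $Q_1$-favoring over $Q_2$-favoring mass grows linearly in $x - 1/I$ and $y - 1/I$, and multiplying by the uniform lower bound on the favoring biases produces the claimed $\frac{1}{12}(x-1/I) \wedge (y-1/I)$. The main obstacle I expect is obtaining this uniform-in-dynamics lower bound on $1 - \E[e^{-2T_{\alpha\beta}}]$ and its three-way analogue: $T_{\alpha\beta}$ is determined by the (random) backward locus-tree structure above $R$, which in turn depends intricately on $\lambda$, $\mu$, and the species-tree topology above $R$; the specific constant $\frac{1}{12}$ presumably arises from the worst-case combination of the 2-block and 3-block contributions once these biases are under control.
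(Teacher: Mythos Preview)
Your partition decomposition matches the paper's (the events $E,F_*,G_*,H_*,K$), and your computation of the partition probabilities is essentially Lemma~\ref{LemmaAncBal}. But two of your structural claims are false as stated, and your quantitative plan misses the mechanism that actually produces the constant.

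First, the symmetry claims $\Delta(\{abc,d\})=0$ and $\Delta(\{abcd\})=0$ do not hold under $\P''$. The swap $b\leftrightarrow c$ preserves the partition but not the law: below $R$, the gene lineages of $a$ and $b$ live on the same side of the balanced species tree and can coalesce there, whereas $a$ and $c$ cannot. The paper handles this by first passing to the worst case $\nocoal$ (no coalescence below $R$ among the four chosen lineages): Lemma~\ref{LemmaCondEcoalBal} shows that conditioning on $\nocoal$ can only decrease $\P''[Q_1]-\P''[Q_2]$, and \emph{under} $\nocoal$ the $b\leftrightarrow c$ swap is legitimate, giving zero contribution from $E$ and the $H$'s. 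The $K$ term is kept separately and contributes strictly positively (this is in fact the source of the strict inequality).

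Second, your racing formula $\Delta(\{ab,cd\})=1-\E[e^{-2T_{\alpha\beta}}]$ and its 3-block analogue treat the gene-tree process as a plain Kingman coalescent on the locus tree. The DLCoal model uses the \emph{bounded} coalescent: on every daughter edge, all gene lineages are conditioned to coalesce before the top of that edge. Since two distinct loci at $R$ merge above $R$ via a duplication, one side is a daughter edge and its lineages are forced to coalesce before the merge. This is exactly why the paper gets $\P''[Q_1\mid G_{ab}\cap\nocoal]=1$ (Lemma~\ref{LemmaSymmetryBal}(b)) and $\phi''_+-\phi''_-\ge 1/3$ (Lemma~\ref{phi-difference-lemma}) by a finite case analysis on the locus topology above $R$---no rate estimate needed. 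The obstacle you anticipate, a uniform lower bound on $1-\E[e^{-2T_{\alpha\beta}}]$, genuinely has no model-free solution ($T_{\alpha\beta}$ can be arbitrarily small), so your approach cannot close without invoking the daughter-edge structure.
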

The next lemma establishes that $x,y \geq 1/I$, similarly to \cite[Lemma 1]{Legried821439}. 
\begin{lemma} 
\label{LemmaLeg1Copies} 
Let $x = \P''[i_a = i_b]$ and $y = \P''[i_c = i_d]$.  On the events $\mathcal{X} \geq \vec{1}$ and $I \geq 1$,
we have almost surely 
\begin{align*}
x \land y \geq \frac{1}{I}.
\end{align*}
\end{lemma}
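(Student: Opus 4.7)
The plan is to use the exchangeability of the $I$ locus copies exiting $R$ to reduce $x \geq 1/I$ to a non-negative covariance inequality. Since the $R$-ancestor of any gene copy is entirely determined by the locus tree (the coalescent stays within locus edges and therefore does not change the assignment to lineages at $R$), this reduces to the corresponding claim for the GDL-only model, as in \cite[Lemma 1]{Legried821439}.

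First I would introduce, for each $i \in \{1,\ldots,I\}$, the random variable $N_i^A$ (resp.\ $N_i^B$) counting the copies in species $A$ (resp.\ $B$) whose locus ancestor at $R$ is the $i$-th locus copy, so $\sum_i N_i^A = \mathcal{A}$ and $\sum_i N_i^B = \mathcal{B}$. Since the labels $1,\ldots,I$ are purely conventional, the joint law of $(N_i^A, N_i^B)_{i=1}^{I}$ under $\P''$ is exchangeable in $i$. Choosing $a$ and $b$ independently and uniformly at random among the $\mathcal{A}$ and $\mathcal{B}$ available copies, conditioning on the partition and then averaging gives
\begin{align*}
x \;=\; \E''\!\left[\,\sum_{i=1}^{I} \frac{N_i^A\,N_i^B}{\mathcal{A}\,\mathcal{B}}\,\right] \;=\; \frac{I}{\mathcal{A}\,\mathcal{B}}\;\E''\bigl[N_1^A\,N_1^B\bigr],
\end{align*}
where the second equality uses exchangeability. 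Exchangeability combined with the sum constraints also yields $\E''[N_1^A] = \mathcal{A}/I$ and $\E''[N_1^B] = \mathcal{B}/I$, so the bound $x \geq 1/I$ is equivalent to the covariance inequality $\mathrm{Cov}_{\P''}(N_1^A, N_1^B) \geq 0$.

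To establish this covariance bound, I would introduce the intermediate variable $M_1$: the number of locus copies at the $(A,B)$-MRCA that descend from the first lineage at $R$ via the forward-in-time birth-death along the edge between $R$ and the $(A,B)$-MRCA. Conditionally on $M_1$, the numbers $N_1^A$ and $N_1^B$ are produced by two independent birth-death processes rooted at those $M_1$ copies along the two disjoint pendant edges leading to $A$ and $B$ respectively. Hence $N_1^A$ and $N_1^B$ are conditionally independent given $M_1$, with conditional means of the form $M_1\,c_A$ and $M_1\,c_B$ for positive constants $c_A, c_B$ depending only on the rates and the relevant edge lengths. The law of total covariance then yields $\mathrm{Cov}(N_1^A, N_1^B) = c_A c_B\,\mathrm{Var}(M_1) \geq 0$ under the unconditional DLCoal law.

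The main obstacle is transferring this non-negative covariance to $\P''$, which additionally fixes the totals $\mathcal{A}$ and $\mathcal{B}$ (and $I$), since conditioning on sums can in principle reverse the sign of a covariance. To handle this, I would argue as in \cite[Lemma 1]{Legried821439}: before conditioning on the totals, the pairs $(N_i^A, N_i^B)_{i=1}^{I}$ are i.i.d., and within each pair both coordinates are non-decreasing functionals of the shared environment $M_i$. For such i.i.d.\ families of co-monotone functionals, an FKG-type positive association argument shows that the covariance of $N_1^A$ and $N_1^B$ remains non-negative even after conditioning on the marginal sums, giving $x \geq 1/I$. The bound $y \geq 1/I$ follows identically by substituting $(C,D)$ for $(A,B)$ and repeating the argument.
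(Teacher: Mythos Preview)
Your reduction to a covariance inequality is correct up to the point where you identify the obstacle, but the resolution you propose is a genuine gap. The claim that for i.i.d.\ pairs $(N_i^A,N_i^B)$ with both coordinates stochastically monotone in a common latent variable, the conditional covariance $\mathrm{Cov}(N_1^A,N_1^B \mid \sum_i N_i^A,\sum_i N_i^B)$ remains nonnegative is not a standard FKG statement, and you do not supply an argument. Conditioning on marginal sums is not a monotone event in any obvious lattice, and without the specific branching structure one can construct positively correlated i.i.d.\ pairs for which conditioning on both sums flips the sign of the covariance. Your appeal to \cite[Lemma~1]{Legried821439} does not help: that lemma does not proceed via a conditional-covariance or FKG argument.

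The paper avoids this difficulty entirely by conditioning at the MRCA of $A$ and $B$ rather than at the leaves. Let $N_j$ be the number of locus copies at the $(A,B)$-MRCA descending from lineage $j$ at $R$. Given $(N_j)_j$, the copies at the MRCA are exchangeable with respect to their further evolution toward $A$ (and toward $B$), so the uniformly chosen copy $a$ traces back to a uniformly random one of the $\sum_j N_j$ copies there; hence $\P''[i_a=j\mid (N_j)_j]=N_j/\sum_k N_k$, and the \emph{same} weights govern $i_b$. Conditional independence of $i_a$ and $i_b$ given $(N_j)_j$ (the $A$- and $B$-side subtrees below the MRCA are independent, and the conditioning on $\mathcal{A}$ and $\mathcal{B}$ acts on each side separately) then gives
\[
\P''[i_a=i_b]=\E''\!\left[\frac{\sum_j N_j^2}{(\sum_j N_j)^2}\right]\geq \frac{1}{I},
\]
the last step being the pointwise Cauchy--Schwarz bound $\sum_j N_j^2\geq(\sum_j N_j)^2/I$. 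The key difference from your approach is that by stopping at the MRCA you get a single weight vector for both $a$ and $b$, turning the problem into a deterministic inequality rather than a conditional-covariance statement that must survive conditioning on two separate sums.
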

\begin{proof}
For $j \in \{1,...,i\}$, let $N_j$ be the number of gene copies descending from $j$ in $R$ that survive to the most recent common ancestor of $A$ and $B$.  Conditioning on $(N_j)_{j}$, the choice of $a$ and $b$ in the locus tree is independent as in \cite{Legried821439}.  So $i_a$ and $i_b$ are picked proportionally to the $N_{j}$'s by symmetry.  Then \begin{align*}
    \P''[i_a = i_b] = \E''[\P''[i_a=i_b|(N_{j})_{j}]] = \E''\left[ \frac{\sum_{j=1}^{I}N_j^2}{(\sum_{j=1}^{I}N_j)^2}\right] \geq \frac{1}{I},
\end{align*} 
as in \cite[Lemma 1]{Legried821439}.
The same holds for $y$, completing the proof of the lemma.
\end{proof}

\subsubsection{Ancestral locus configurations} 

Conditioned on $\mathcal{X}$ and $I$, we will characterize the occurrence of $Q_1,Q_2,Q_3$ based on how $i_{x}, x \in \{a,b,c,d\}$ are selected at the root $R$.  Then, in a worst-case scenario, we will analyze coalescent events of the coalescent process above $R$.  For an arbitrary quartet $(a,b,c,d)$, we relate the likelihood of $Q_1,Q_2,Q_3$ under each of the following events: \begin{align}
\nonumber E &= a - b - c - d \\
\nonumber F_{ab} &= ab - c - d & F_{ac} &= ac - b - d & F_{ad} &= ad - b - c \\
\label{LocusTopR} F_{bc} &= bc - a - d & F_{bd} &= bd - a - c & F_{cd} &= cd - a - b \\
\nonumber G_{ab} &= ab - cd & G_{ac} &= ac - bd & G_{ad} &= ad - bc \\
\nonumber H_{abc} &= abc - d & H_{abd} &= abd - c & H_{acd} &= acd - b & H_{bcd} &= bcd - a \\
\nonumber K & = abcd,
\end{align}
where $-$ indicates separate lineages at $R$ for the chosen copies from $A,B,C,D$.  For example, the event $E$ indicates that the $i_a,i_b,i_c,i_d$ are distinct.  These events are disjoint and mutually exhaustive.  Letting $\mathcal{E}$ run across all the above events, the law of total probability implies 
\begin{align}
\label{Eqn:TotalProbE} 
\P''[Q_i] = \sum_{\mathcal{E}}\P''[Q_i|\mathcal{E}]\P''[\mathcal{E}].
\end{align}

\subsubsection{Reduction to coalescence above $R$}

For the rooted locus quartet implied by the four copies $a,b,c,d$, let $\nocoal$ be the event that no coalescent event occurs beneath $R$ between the four corresponding lineages. The following lemma shows that conditioning on $\nocoal$ reduces the probability of $Q_1$ while increasing that of $Q_2$.
\begin{lemma} 
\label{LemmaCondEcoalBal}
	For any $I$ and any $\mathcal{X} \geq \vec{1}$ and any event  $$\mathcal{E}\in \{E, F_{ab},\ldots,G_{ab},\ldots, H_{abc},\ldots\},$$
	we have 
\begin{align*}
	\P''[Q_1|\mathcal{E}] \geq \P''[Q_1|\mathcal{E}\cap\nocoal]
	\quad \text{and}\quad 
	\P''[Q_i|\mathcal{E}] \leq \P''[Q_i|\mathcal{E}\cap\nocoal], \quad i \in \{2,3\},
\end{align*}
almost surely.
\end{lemma}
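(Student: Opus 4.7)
The plan is to reduce the lemma to a single structural observation about how the locus tree is embedded in the species tree, and then apply the law of total probability. The heart of the argument is this: in the balanced case, any coalescent event among $\{a,b,c,d\}$ that occurs beneath $R$ must be between a species-matching pair, which immediately commits the final quartet topology to $Q_1$.

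First I would make the structural observation. In the balanced case, the rooted quartet species tree has topology $((A,B),(C,D))$, so beneath $R$ the species tree splits into two disjoint subtrees, which I will call the ``$AB$-side'' and the ``$CD$-side''. Because the locus tree is generated top-down inside the species tree, no locus lineage beneath $R$ is shared across the two sides. Since a coalescent event between two gene lineages requires them to inhabit the same locus lineage, the only pairs in $\{a,b,c,d\}$ that can possibly coalesce beneath $R$ are $(a,b)$ (both on the $AB$-side) and $(c,d)$ (both on the $CD$-side).

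Next I would extract the topological consequence. Any tree on the four leaves $\{a,b,c,d\}$ that contains $(a,b)$ or $(c,d)$ as a clade has unrooted quartet topology $ab|cd=Q_1$. Therefore, under any $\mathcal{E}$ in the list, conditional on at least one coalescent event beneath $R$, the resulting quartet is deterministically $Q_1$, i.e.\ $\P''[Q_1\mid \mathcal{E}\cap\nocoal^c]=1$ and $\P''[Q_i\mid \mathcal{E}\cap\nocoal^c]=0$ for $i\in\{2,3\}$. Writing $p=\P''[\nocoal^c\mid \mathcal{E}]$ and applying the law of total probability then yields
\begin{align*}
\P''[Q_1\mid\mathcal{E}]
&= (1-p)\,\P''[Q_1\mid \mathcal{E}\cap\nocoal] + p
\;\geq\; \P''[Q_1\mid \mathcal{E}\cap\nocoal], \\
\P''[Q_i\mid\mathcal{E}]
&= (1-p)\,\P''[Q_i\mid \mathcal{E}\cap\nocoal]
\;\leq\; \P''[Q_i\mid \mathcal{E}\cap\nocoal],
\quad i\in\{2,3\},
\end{align*}
which is the claim.

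The main (and really only) obstacle is articulating the embedding observation cleanly: the beneath-$R$ portion of the locus tree respects the species split, so locus lineages on the $AB$-side and the $CD$-side are disjoint. Everything else is elementary. As a sanity check on the statement, note that several of the listed events (e.g.\ $F_{ac},F_{ad},G_{ac},G_{ad}$) satisfy $\mathcal{E}\subseteq\nocoal$ automatically, precisely because the paired copies lie on opposite sides of the split, and in those cases both inequalities reduce to equalities.
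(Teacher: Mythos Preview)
Your proof is correct and follows essentially the same approach as the paper: both decompose $\P''[Q_i\mid\mathcal{E}]$ via the law of total probability over $\nocoal$ and $\nocoal^c$, using that $\nocoal^c$ forces $Q_1$. The paper simply asserts that ``$Q_1$ is guaranteed under $\nocoal^c$'' without explanation, whereas you supply the underlying structural reason (the $AB$-side/$CD$-side split beneath $R$), so your version is in fact a bit more complete.
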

\begin{proof}
	For $Q_1$, the law of total probability implies \begin{align*}
	\P''[Q_1|\mathcal{E}] = \P''[\nocoal^c|\mathcal{E}] + \P''[Q_{1}|\mathcal{E}\cap\nocoal]
	\,\P''[\nocoal|\mathcal{E}] \geq \P''[Q_1|\mathcal{E}\cap\nocoal],
	\end{align*} 
	where we used that $Q_1$ is guaranteed under $\nocoal^c$.  
    Similarly
	\begin{align*}
	\P''[Q_i|\mathcal{E}] 
	= \P''[Q_i|\mathcal{E}\cap\nocoal]
	\,\P''[\nocoal|\mathcal{E}] \leq \P''[Q_i|\mathcal{E}\cap\nocoal],
	\end{align*} 
	for $i \in \{2,3\}$.
\end{proof} 

The event $K$ will play a special role in the proof and we treat it separately. For the other terms, combining \eqref{Eqn:TotalProbE} and Lemma \ref{LemmaCondEcoalBal}, we have 
\begin{align*}
    \P''[Q_1] - \P''[Q_2] 
    &= (\P''[Q_1|K] - \P''[Q_2|K]) \P''[K]\\
    & \quad + \sum_{\mathcal{E} \neq K}\left(\P''[Q_1|\mathcal{E}]-\P''[Q_2|\mathcal{E}]\right)\P''[\mathcal{E}] \\
    &\geq (\P''[Q_1|K] - \P''[Q_2|K]) \P''[K]\\
    & \quad + \sum_{\mathcal{E} \neq K}\left(\P''[Q_1|\mathcal{E}\cap\nocoal] - \P''[Q_2|\mathcal{E}\cap\nocoal]\right)\P''[\mathcal{E}].
\end{align*} 
To prove Proposition \ref{PropClaimBal}, we derive an explicit bound on this last sum.
	
Under $\P''$, the events $E,H$ are symmetric in the sense that switching the roles of $a$ and $c$ or the roles of $a$ and $d$ does not change the conditional probability of $Q_{1}$ and $Q_{2}$.  Hence 
\begin{align*}
    \P''[Q_1|\mathcal{E}\cap\nocoal]
    =
    \P''[Q_2|\mathcal{E}\cap\nocoal],
    \quad \forall \mathcal{E}\in\{E,H\}
\end{align*}
and using this above we get
\begin{align}
    \P''[Q_1] - \P''[Q_2] 
    &\geq (\P''[Q_1|K] - \P''[Q_2|K]) \P''[K]\nonumber \\
    & \quad + \sum_{j \in \{ab,ac,ad\}} 
    \left(\P''[Q_1|G_{j}\cap\nocoal] - \P''[Q_2|G_{j}\cap\nocoal]\right)\P''[G_j]\nonumber \\
    &\quad + \sum_{j \in \{ab,...,cd\}}
    \left(\P''[Q_1|F_j\cap\nocoal] - \P''[Q_2|F_j\cap\nocoal]\right)\P''[F_j]. \label{Eqn:TotalProbE-FG}
\end{align}

\subsubsection{The $F$ and $G$ events}

We now consider the events $\{F_{ab},F_{cd},F_{ac},F_{bd}\}$ and $\{G_{ab},G_{ac}\}$.  

\paragraph{Event probabilities}
In the next lemma, we compute the probabilities of a given locus tree quartet satisfying the events in $\{F_{ab},...,F_{cd},G_{ab},G_{ac}\}$.  

\begin{lemma} 
\label{LemmaAncBal}
    Let $x = \P''[i_a = i_b]$ and $y = \P''[i_c = i_d]$.
	For $I \geq 2$ and any $\mathcal{X}\geq\vec{1}$, the following hold almost surely:
	\begin{align*}
	\nonumber \P''[F_{ab}]  &= \frac{I-2}{I}x(1-y) \\
	\nonumber \P''[F_{cd}] &= \frac{I-2}{I}(1-x)y \\
	\nonumber \P''[F_{ac}]  = \P''[F_{bd}] &= \frac{I-2}{I(I-1)}(1-x)(1-y) \\
	\nonumber \P''[G_{ab}] &= \frac{I-1}{I}xy \\
	\P''[G_{ac}] &= \nonumber \frac{1}{I(I-1)}(1-x)(1-y)
	\end{align*}
\end{lemma}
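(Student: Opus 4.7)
The plan is to exploit two structural features of the DLCoal model in the balanced case: exchangeability of the $I$ lineages exiting $R$, and independence of the branching processes on the two sides of $R$.

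First I would pin down the joint law of $(i_a,i_b)$. Each of the $I$ lineages exiting $R$ seeds an i.i.d.\ copy of the same birth-death/bifurcation dynamics on the left subtree of $R$ (the one containing $A$ and $B$), so the per-lineage descendant counts $(N_j^A, N_j^B)_{j=1}^I$ are exchangeable in $j$. The uniform selection rule $\P''[i_a = j \mid (N_j^A)_j] = N_j^A/\mathcal{A}$, together with the analogous statement for $i_b$, transfers this symmetry to $(i_a,i_b)$. The conditioning event $\mathcal{X}\geq\vec{1}$ is itself invariant under relabeling the $I$ initial lineages, so exchangeability is preserved under $\P''$. Combined with $\P''[i_a = i_b] = x$, this forces
\begin{align*}
\P''[i_a = j,\, i_b = j] = \frac{x}{I} \quad \text{for all } j, \qquad \P''[i_a = j,\, i_b = k] = \frac{1-x}{I(I-1)} \quad \text{for } j\neq k,
\end{align*}
with the analogous identities for $(i_c,i_d)$ and $y$. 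The off-diagonal formula requires $I\geq 2$, matching the lemma's hypothesis.

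Second I would observe that $(i_a,i_b)$ is independent of $(i_c,i_d)$ under $\P''$. Given $I$, the branching processes on the two subtrees of $R$ are independent, and the uniform random choices of $a,b,c,d$ are independent of each other and of the process. Since the event $\mathcal{X}\geq \vec{1}$ factors as $\{\mathcal{A},\mathcal{B}\geq 1\}\cap\{\mathcal{C},\mathcal{D}\geq 1\}$, the first factor is measurable with respect to the left subtree and the second with respect to the right, so the conditioning preserves the independence.

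Given steps one and two, each of the six probabilities is a direct enumeration. For instance,
\begin{align*}
\P''[F_{ab}]
= \sum_{j}\P''[i_a = i_b = j]\cdot \P''[i_c\neq i_d,\ i_c\neq j,\ i_d\neq j]
= I\cdot\frac{x}{I}\cdot (I-1)(I-2)\cdot\frac{1-y}{I(I-1)}
= \frac{I-2}{I}\,x(1-y),
\end{align*}
and the other five cases follow by the same recipe: partition the configurations of $(i_a,i_b,i_c,i_d)$ consistent with the event, plug in the two-point formulas, multiply using independence, and count. I expect step one---justifying exchangeability of the $I$ initial lineages after the two-sided conditioning in $\P''$---to be the conceptual crux, while steps two and three are essentially bookkeeping.
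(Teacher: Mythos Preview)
Your proposal is correct and follows essentially the same route as the paper: both arguments rest on the independence of $(i_a,i_b)$ from $(i_c,i_d)$ under $\P''$ together with the exchangeability of the $I$ lineages at $R$, after which each formula is a short counting computation. You make the exchangeability and independence justifications more explicit than the paper (which invokes them implicitly via phrases like ``has $I-2$ choices''), and note that $\P''$ in fact conditions on the exact values of $\mathcal{X}$ and $I$ rather than merely on $\{\mathcal{X}\geq\vec{1}\}$---your invariance and factorization arguments carry over verbatim to that finer conditioning.
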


\begin{proof}
	The calculations for $F_{ab}$ and $F_{cd}$ are similar, except that we condition on different events. Indeed, note that
	\begin{align}
	\nonumber \P''[F_{ab}] &= \P''[F_{ab}|i_a = i_b, i_c \ne i_d] \P''[i_a = i_b]\P''[i_c \ne i_d] \\
	\nonumber \P''[F_{cd}] &= \P''[F_{cd}[i_a \ne i_b, i_c = i_d] \P''[i_a \ne i_b]\P''[i_c = i_d].
	\end{align} The conditional probability of $F_{ab}$ is then obtained by considering that given the placement of the pair $(i_c,i_d)$ among the $I$ ancestral lineages, the shared lineage $i_a = i_b$ has $I-2$ choices where they do not intersect $\{i_c,i_d\}$.  The result in the statement follows.  
	Similarly, for $F_\iota$ with $\iota \in \{ac,bd\}$, we have
	\begin{align}
	\nonumber \P''[F_\iota] &= \P''[F_\iota|i_a \ne i_b, i_c \ne i_d]\P''[i_a \ne i_b]\P''[i_c \ne i_d].
	\end{align} In this case, out of $I(I-1)$ choices for $i_a$ and $i_b$, the choice of $i_c$ is determined and there are $I-2$ remaining choices for $i_d$, implying the result.
	
	We use the same principle for $G_{ab}$ and $G_{ac}$.  Keeping this in mind, we have \begin{align}
	\nonumber \P''[G_{ab}] &= \P''[G_{ab}|i_a = i_b,i_c = i_d]\P''[i_a=i_b]\P'_{I}[i_c = i_d] \\
	\nonumber \P''[G_{ac}] &= \P''[G_{ac}|i_a \ne i_b, \nonumber i_c \ne i_d]\P''[i_a \ne i_b]\P''[i_c \ne i_d],
	\end{align} and we proceed as before to get the result.
\end{proof}

Using the previous lemma, we collect further bounds on the probabilities of events at the root of the locus tree.
\begin{lemma} 
\label{LemmaDifferencesBal} 
Letting again $x = \P''[i_a = i_b]$ and $y = \P''[i_c = i_d]$, the following statements hold.
\begin{itemize}
\item[(a)] If $I=2$ then \begin{equation*}\P''[G_{ab}] - \P''[G_{ac}] \geq  \left(x-\frac{1}{2}\right)\wedge\left(y-\frac{1}{2}\right)\end{equation*}
\item[(b)] If $I\geq 3$ and $x \land y \geq 1/2$, then \begin{equation*}\P''[G_{ab}] - \P''[G_{ac}] \geq 1/8.\end{equation*}
\item[(c)] If $I = 2$, \begin{equation*}\P''[F_{ab}] - \P''[F_{ac}] - \P''[F_{bd}] + \P''[F_{cd}] = 0.\end{equation*}
\item[(d)] If $I\geq 3$ and $x \land y \leq 1/2$, then \begin{equation*}\P''[F_{ab}] - \P''[F_{ac}] - \P''[F_{bd}] + \P''[F_{cd}] \geq \frac{1}{4}\left(x-\frac{1}{I}\right)\wedge\left(y-\frac{1}{I}\right).\end{equation*} 
\end{itemize}
\end{lemma}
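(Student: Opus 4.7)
The plan is to handle all four parts by direct substitution of the closed-form expressions from Lemma~\ref{LemmaAncBal}, with the essential work concentrated in part (d). Parts (a) and (c) collapse at $I=2$: the factor $(I-2)$ annihilates every $F$-probability, which disposes of (c); and the $G$-probabilities reduce to $\tfrac{1}{2}xy$ and $\tfrac{1}{2}(1-x)(1-y)$, whose difference is $\tfrac{1}{2}(x+y-1)$, bounded below by $(x\wedge y) - 1/2$ via the trivial inequality $(x+y)/2\geq x\wedge y$.

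For part (b), I would use the hypothesis $x\wedge y\geq 1/2$ to get $xy\geq 1/4$ and $(1-x)(1-y)\leq 1/4$. Combined with the prefactor bounds $(I-1)/I\geq 2/3$ and $I(I-1)\geq 6$ valid for $I\geq 3$, Lemma~\ref{LemmaAncBal} yields $\P''[G_{ab}]\geq 1/6$ and $\P''[G_{ac}]\leq 1/24$, whose difference is at least $1/8$.

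The substantive step is part (d). Define
\begin{align*}
\Phi(x,y)
&:= \P''[F_{ab}] - \P''[F_{ac}] - \P''[F_{bd}] + \P''[F_{cd}] \\
&= \tfrac{I-2}{I}\left[x(1-y) + (1-x)y - \tfrac{2(1-x)(1-y)}{I-1}\right],
\end{align*}
which is symmetric in $x,y$. Assume without loss of generality $x\leq y$, so $(x-1/I)\wedge(y-1/I) = x-1/I$ and $x\leq 1/2$ by the hypothesis of (d). A short calculation gives
\begin{align*}
\partial_y \Phi(x,y) = \tfrac{(I-2)(I+1 - 2xI)}{I(I-1)},
\end{align*}
which is strictly positive whenever $x\leq 1/2$ and $I\geq 3$ (since then $2xI\leq I<I+1$). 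Hence $\Phi$ is increasing in $y$ on $[x,1]$, and it suffices to bound $\Phi(x,x)$ from below. Factoring yields
\begin{align*}
\Phi(x,x) = \tfrac{2(I-2)(1-x)(Ix-1)}{I(I-1)},
\end{align*}
and the target bound reduces, after dividing through by $(Ix-1)/I\geq 0$ (nonnegative by Lemma~\ref{LemmaLeg1Copies}), to $(1-x)\geq (I-1)/(8(I-2))$. This follows from $x\leq 1/2$ together with the elementary fact that $(I-1)/(8(I-2))\leq 1/2$ for every $I\geq 3$ (equivalently $3I\geq 7$).

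I do not foresee any genuine obstacle. The only non-routine move is the symmetry-and-monotonicity reduction to the diagonal $x=y$ in part (d), which relies crucially on the hypothesis $x\wedge y\leq 1/2$ to keep $\partial_y\Phi$ positive; without this hypothesis the minimizing $y$ could drift away from $x$ and the diagonal bound would no longer control $\Phi$ uniformly.
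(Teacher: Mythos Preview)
Your proof is correct. Parts (a)--(c) are essentially identical to the paper's argument. For part (d), however, you take a genuinely different route: you use symmetry to reduce to $x\leq y$, then compute $\partial_y\Phi$ and invoke monotonicity to reduce to the diagonal $\Phi(x,x)$, which you then factor explicitly. The paper instead rewrites $\Phi$ directly as
\[
\Phi(x,y)=\frac{I-2}{I(I-1)}\Big[(1-y)(Ix-1)+(1-x)(Iy-1)\Big],
\]
a sum of two manifestly nonnegative terms (by Lemma~\ref{LemmaLeg1Copies}), and simply drops one of them to get $\Phi\geq \frac{I-2}{I(I-1)}(1-\min(x,y))(I\max(x,y)-1)$; the hypothesis $\min(x,y)\leq 1/2$ then gives the bound in one line. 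Your monotonicity argument is perfectly valid and makes transparent \emph{why} the hypothesis $x\wedge y\leq 1/2$ is needed (it keeps $\partial_y\Phi$ positive), whereas the paper's decomposition is shorter and avoids calculus entirely. Both land on the same final inequality $(1-x)\cdot\frac{I-2}{I-1}\geq 1/4$ for $x\leq 1/2$, $I\geq 3$. One small point: when you ``divide through by $(Ix-1)/I$'' you should note explicitly that the $x=1/I$ case is trivial since both sides vanish.
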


\begin{proof} 
By Lemma \ref{LemmaAncBal}, $\P''[G_{ab}] - \P''[G_{ac}] =  \frac{1}{2}(x+y-1)$ which implies part (a). 

To prove (b), observe that by Lemma \ref{LemmaAncBal} again,
\begin{align*} 
  \P''[G_{ab}] - \P''[G_{ac}] 
  &= \frac{I-1}{I}xy-\frac{1}{I(I-1)}(1-x)(1-y)\\
  &\geq \frac{1}{2}\left(\frac{I-1}{I}y - \frac{1}{I(I-1)}(1-y) \right)\\
  &\geq \frac{1}{4}\left(\frac{I-1}{I}-\frac{1}{I(I-1)}\right)\\
  &=\frac{1}{4}\left(\frac{I-2}{I-1}\right)
\end{align*}
where the inequalities are justified by the assumption $x \land y \geq 1/2$. Since $I\geq 3$, it follows that $\P''[G_{ab}] - \P''[G_{ac}] \geq 1/8$.

To prove (c) and (d), observe that by Lemma \ref{LemmaAncBal}, 
\begin{align}
    \nonumber &\P''[F_{ab}] - \P''[F_{ac}] - \P''[F_{bd}] + \P''[F_{cd}]\\
    \nonumber &= \frac{I-2}{I}\left(x(1-y) + y(1-x)\right) - 2 \frac{I-2}{I(I-1)}(1-x)(1-y) \\
    \nonumber &= \frac{I-2}{I(I-1)}\left((1-y)\left((I-1)x - (1-x)\right) + (1-x)\left((I-1)y - (1-y)\right)\right) \\
    \nonumber &= \frac{I-2}{I(I-1)}\left((1-y)(Ix - 1) + (1-x)(Iy - 1)\right).
\end{align} 
Clearly if $I=2$, the right-hand side is zero, which proves (c). Furthermore, since $x,y\geq 1/I$ by Lemma~\ref{LemmaLeg1Copies}, 
it follows that both $(1-y)(Ix - 1)\geq 0$ 
and $(1-x)(Iy - 1)\geq 0$, and therefore
\begin{equation*}
  \P''[F_{ab}] - \P''[F_{ac}] - \P''[F_{bd}] + \P''[F_{cd}] \geq \frac{I-2}{I(I-1)}\left(1-u\right)\left(Iv  - 1\right)
\end{equation*}
for $(u,v)\in \{(x,y), (y,x)\}$. Taking $u=\min(x,y)$ and $v=\max(x,y)$ gives
\begin{align*}
  \P''[F_{ab}] - \P''[F_{ac}] - \P''[F_{bd}] + \P''[F_{cd}] 
  &\geq \frac{I-2}{I(I-1)}\left(1-\min(x,y)\right)\left(I\max(x,y)  - 1\right) \\
  & \geq \frac{I-2}{I-1} \left(1-\min(x,y)\right)\left(\max(x,y) - \frac{1}{I}\right) \\
  & \geq \frac{1}{2}\left(1-\min(x,y)\right)\left(\max(x,y) - \frac{1}{I}\right) \\
  & \geq \frac{1}{4}\left(\max(x,y) - \frac{1}{I}\right)
\end{align*}	
which implies (d).
\end{proof}

\paragraph{Conditional probabilities of quartet topologies} In the following lemma, we give expressions for $\P''[Q_i|\mathcal{E}\cap\nocoal]$ across the events $\{F_{ab},F_{cd},F_{ac},F_{bd}\}$ and $\{G_{ab},G_{ac}\}$.
\begin{lemma} 
\label{LemmaSymmetryBal}
	(a) For any $I$ and any $\mathcal{X} \geq \vec{1}$, we have \begin{align*}
	\P''[Q_1 | F_{ab}\cap\nocoal]
	= \P''[Q_1|F_{cd}\cap\nocoal]
	= \P''[Q_2 | F_{ac}\cap\nocoal]
	= \P''[Q_2 | F_{bd}\cap\nocoal]
	:= \phi''_+
	\end{align*} 
	and
	\begin{align*}
	\P''[Q_2 | F_{ab}\cap\nocoal]
	= \P''[Q_2| F_{cd}\cap\nocoal]
	= \P''[Q_1 | F_{ac}\cap\nocoal]
	= \P''[Q_1 | F_{bd}\cap\nocoal]
	:= \phi''_-.
	\end{align*}

	(b) For any $I$ and any $\mathcal{X} \geq \vec{1}$, we have \begin{align*}
	\P''[Q_1|G_{ab}\cap\nocoal] = 1
	\end{align*} 
	and 
	\begin{align*}
	\P''[Q_2|G_{ac}\cap\nocoal] = 1.
	\end{align*}
\end{lemma}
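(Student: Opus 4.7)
My plan is to prove the two parts with separate arguments: part~(a) by exchangeability and part~(b) by the bounded-coalescent condition.

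For part~(a), the key observation is that, conditional on $I$, $\mathcal{X}$, and any of the events $F_\iota \cap \nocoal$ with $\iota\in\{ab,cd,ac,bd\}$, the distribution of the locus tree above $R$ depends only on $I$, and the coalescent dynamics for the four gene lineages above $R$ depend only on the locus tree above $R$ together with the abstract \emph{shape} of the configuration at $R$, namely two gene lineages sharing a single locus tree lineage and two further gene lineages each in their own lineage. This is because the locus tree above $R$ is generated independently of the events below $R$ (given $I$), and the $I$ locus tree lineages at $R$ are exchangeable with respect to the above-$R$ process. Within this shape, two natural symmetries apply: swapping the two paired gene lineages, and swapping the two singleton locus tree lineages. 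These imply that the three unrooted topologies on the four gene lineages have probabilities $p_+, p_-, p_-$, with $p_+$ the probability of the ``pair-together'' split. Under each event $F_{ab}, F_{cd}, F_{ac}, F_{bd}$, identifying which of $Q_1, Q_2, Q_3$ corresponds to ``pair-together''—by directly matching the pair in $F_\iota$ to the split in $Q_j$—yields the two common values $\phi''_+$ and $\phi''_-$ as claimed.

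For part~(b), condition on $G_{ab}\cap\nocoal$, so at $R$ the gene lineages $a,b$ share a locus tree lineage $j_1$ and $c,d$ share another locus tree lineage $j_2$. I would trace the ancestors of $j_1$ and $j_2$ backward through the locus tree to the most recent duplication node $M$ at which they first meet. At $M$, exactly one of the two sibling edges is the daughter; by the bounded-coalescent condition, every gene lineage entering the daughter edge at its bottom must coalesce into a single lineage by the top of that edge. Either the $j_1$-ancestor edge is the daughter, forcing $a$ and $b$ (along with any out-of-quartet copies that joined via intermediate mergers) to coalesce within it; or the $j_2$-ancestor edge is the daughter, forcing $c$ and $d$ to coalesce. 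Either way, one of $\{a,b\}$ or $\{c,d\}$ is a clade of the full gene tree, and on four leaves this forces the restricted unrooted topology on $\{a,b,c,d\}$ to be $ab\mid cd$, i.e., $Q_1$. The claim $\P''[Q_2\mid G_{ac}\cap\nocoal]=1$ follows identically after swapping the roles of $b$ and $c$.

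I expect the main subtlety to lie in part~(b), where intermediate duplications on the path between $R$ and $M$ may allow out-of-quartet gene lineages to enter the daughter edge at $M$ alongside the ancestors of $a$ and $b$. Care is needed to confirm that, even when the forced coalescences involve these extra lineages, restricting the resulting gene tree to $\{a,b,c,d\}$ still leaves $\{a,b\}$ (or $\{c,d\}$) as a clade; this reduces to the standard observation that restriction to a leaf subset preserves induced clades, so coalescent events with out-of-quartet lineages cannot disrupt the clade structure on the restricted tree.
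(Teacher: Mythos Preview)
Your proposal is correct and follows essentially the same approach as the paper: part~(a) is argued by symmetry/exchangeability of the locus-tree lineages at $R$, and part~(b) uses that at the duplication where the two relevant locus-tree lineages merge, one side is a daughter edge so the bounded coalescent forces the corresponding pair to coalesce. The paper's own proof is much terser (literally ``by symmetry'' for~(a) and a one-line daughter-edge remark for~(b)), so your explicit treatment of exchangeability given $I$, of intermediate duplications, and of out-of-quartet lineages simply fills in detail the paper leaves to the reader.
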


\begin{proof}
	(a) The quantities $\phi''_{+}$ and $\phi''_-$ are indeed well-defined as above by symmetry.  (b) By switching the roles of $b$ and $c$, we observe that $\P''[Q_{1}|G_{ab}\cap\nocoal] = \P''[Q_{2}|G_{ac}\cap\nocoal]$.  To see why $\P''[Q_{1}|G_{ab}\cap\nocoal] = 1$, we again examine the topology above the root with leaves $ab$ and $cd$.  At least one of these leaves descends from a daughter edge, which implies $Q_{1}$ is constructed with probability $1$.  This completes the proof of the lemma.
\end{proof}

The following lemma establishes that, conditioned on $F_{ab}$ and $\nocoal$, the difference in probability between $Q_1$ and $Q_2$ is at least $1/3$. 

\begin{lemma}\label{phi-difference-lemma}
For $I \geq 1$ and any $\mathcal{X}\geq \vec{1}$, we have \begin{align*}
\phi''_{+} - \phi''_{-} \geq \frac{1}{3}.
\end{align*} 
\end{lemma}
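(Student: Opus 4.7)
The plan is to track the four tracer lineages of $a, b, c, d$ in the restricted gene tree $t^{\mathcal{Q}}$ going backward from $R$, using the fact that the unrooted quartet topology on $\{a, b, c, d\}$ is determined by the first pairwise coalescence of these tracer lineages: $(a,b)$ or $(c,d)$ yields $Q_1$; $(a,c)$ or $(b,d)$ yields $Q_2$; $(a,d)$ or $(b,c)$ yields $Q_3$. Under $F_{ab} \cap \nocoal$, the tracers $a$ and $b$ share the locus lineage $\ell_{ab}$ at $R$, so their pairwise coalescence rate is $1$ immediately, whereas the other five tracer pairs require locus lineage mergers before any coalescence is possible.

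Exploiting the $a \leftrightarrow b$ and $c \leftrightarrow d$ symmetries of $F_{ab}$ under $\P''$, the four probabilities $\P''[(a,c)\text{ first}]$, $\P''[(a,d)\text{ first}]$, $\P''[(b,c)\text{ first}]$, $\P''[(b,d)\text{ first}]$ are all equal to a common value $P^{\ast}$, and $\phi''_- = 2 P^{\ast}$. Combined with $\phi''_+ + 2 \phi''_- = 1$, the claim $\phi''_+ - \phi''_- \geq 1/3$ is equivalent to $P^{\ast} \leq 1/9$.

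To bound $P^{\ast}$, I would condition on the above-$R$ locus tree structure together with the uniformly random daughter/mother labels at each backward duplication. The key mechanism is the daughter-edge constraint: all gene lineages in a daughter edge must coalesce within that edge, so whenever the locus lineage currently containing the uncoalesced pair $\{a, b\}$ is a daughter, $(a, b)$ is forced to be the first restricted tracer coalescence and $Q_1$ results. A case analysis on the first event involving $\ell_{ab}, \ell_c, \ell_d$ yields: (i) $\ell_{ab}$ is the daughter at its first merger, giving $Q_1$; (ii) $\ell_c$ (or $\ell_d$) is the daughter at a first merger with $\ell_{ab}$, after which three tracers share the combined lineage and Kingman exchangeability gives each $Q_i$ conditional probability $1/3$; (iii) $\ell_c$ and $\ell_d$ merge first, in which case either the subsequent daughter constraint on $\ell_{ab}$ forces $(a, b)$ or the daughter constraint on $\ell_{cd}$ forces the restricted merger of $c$ and $d$---either way yielding $Q_1$. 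Mergers with outside (tracer-free) locus lineages do not change the tracer partition and, when the outside lineage is the mother, additionally give a $1/2$ chance of forcing $(a, b)$. The main obstacle is aggregating these contributions for general $I \geq 3$ with arbitrary above-$R$ structures, but the daughter-based forcing mechanism---already tight enough in the worst case of instantaneous mergers for $I = 3$---ensures $P^{\ast} \leq 1/9$ in all scenarios by systematic case analysis.
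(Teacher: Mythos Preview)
Your reduction to $P^*\le 1/9$ via the $a\leftrightarrow b$ and $c\leftrightarrow d$ symmetries is correct and is a clean equivalent reformulation of the target inequality. The daughter-edge forcing mechanism you identify is also exactly the right ingredient. However, your case analysis is organized around the \emph{sequence} of locus-lineage mergers (including mergers with outside, tracer-free lineages), which you yourself flag as hard to aggregate for general $I\ge 3$. The paper sidesteps this difficulty entirely with a single observation: instead of tracking the temporal order of mergers, condition on the \emph{rooted triple topology} of the locus tree above $R$ restricted to the three leaves $\ell_{ab},\ell_c,\ell_d$. By exchangeability of the $I$ lineages at $R$, each of the three rooted topologies $\tau_{ab}=((c,d),ab)$, $\tau_c=((ab,d),c)$, $\tau_d=((ab,c),d)$ has probability exactly $1/3$, regardless of $I$ and regardless of outside lineages.

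With that decomposition the computation is immediate. Under $\tau_{ab}$, at the top split one side is $\{ab\}$ and the other is $\{c,d\}$; whichever is the daughter forces the corresponding pair to coalesce, so $Q_1$ occurs with probability $1$ (this is your case~(iii)). Under $\tau_c$ or $\tau_d$, let $p$ be the probability that $a$ and $b$ coalesce along the pendant edge for $ab$; if they do, $Q_1$; if not, the three tracers $a,b,c$ (or $a,b,d$) are exchangeable going forward, giving each $Q_i$ probability $1/3$. This yields
\[
\phi''_+-\phi''_-=\tfrac{1}{3}\bigl(1+2p\bigr)\ge \tfrac{1}{3},
\]
which in your notation is $P^*=(1-p)/9\le 1/9$.

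Two remarks on your write-up: in case~(ii) you assert each $Q_i$ has conditional probability $1/3$, but this is only the residual contribution \emph{after} accounting for the event that $a,b$ have already coalesced along the pendant edge (probability $p$); the omission is harmless for the upper bound on $P^*$, but it is precisely this $p>0$ that gives the strict inequality in the paper. More importantly, your scheme of handling outside-lineage mergers one at a time is unnecessary once you condition on the restricted triple topology --- that single step absorbs all of the ``general $I\ge 3$'' complexity you were worried about.
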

\begin{proof}
By definition of $\phi''_{+}$ and $\phi''_{-}$, it suffices to show $\P''[Q_1|F_{ab}\cap\nocoal] - \P''[Q_2|F_{ab}\cap\nocoal]\geq 1/3$. Conditioned on $F_{ab}\cap\nocoal$, no coalescent event between the chosen lineages occurs beneath $R$.  So, we examine the topology of the locus tree above the root with leaf set being the three leaves implied by $F_{ab}$.  Using the law of total probability, we condition further across the three possible rooted locus topologies on the three leaves $ab$, $c$, and $d$.  Let $\tau_{i}$ be the rooted topology in which character $i$ is the outgroup of the triple.  Using Newick tree format, for example we have $\tau_{ab} = ((c,d),ab)$.  Then 
\begin{align*}
&\P''[Q_1|F_{ab}\cap\nocoal] 
- \P''[Q_2|F_{ab}\cap\nocoal] \\
& \quad = 
\frac{1}{3}\sum_{i}\left( 
\P''[Q_{1}|\tau_{i},F_{ab}\cap\nocoal] - \P''[Q_{2}|\tau_{i},F_{ab}\cap\nocoal]\right),
\end{align*} 
where we used the fact that 
$\P''[\tau_{i}|F_{ab}\cap\nocoal] = 1/3$ for each $i$. 
Now we compute the summands.  If $i = ab$, then either $ab$ descends from a daughter lineage or the pair $(c,d)$ descends from a daughter lineage, meaning we observe $Q_1$ with probability $1$ and $Q_2$ with probability $0$.  In the other two cases, let $p>0$ be the probability that $a$ and $b$ coalesce along the pendant edge for $ab$.  If they do not coalesce along the pendant edge, then the lineages from $a$ and $b$ live in the same population as that of $c$.  Then there is probability $1/3$ that the first coalescing pair among $a,b,c$ is $a,b$.  So the probability of observing $Q_1$ is $p+\frac{1}{3}(1-p)$.  There is probability $1/3$ that the first coalescing pair among $a,b,c$ is $a,c$, so the probability of observing $Q_2$ is $\frac{1}{3}(1-p)$. Then \begin{align*}
    \phi_{+}^{''} - \phi_{-}^{''} = \frac{1}{3}\left(1 - 0 + 2\left(p +\frac{1}{3}(1-p) - \frac{1}{3}(1-p)\right)\right) = \frac{1}{3}(1+2p) \geq \frac{1}{3}.
\end{align*}
\end{proof}

\subsubsection{Proof of Proposition \ref{PropClaimBal}}

With that we can prove Proposition \ref{PropClaimBal}.
\begin{proof}[Proof of Proposition \ref{PropClaimBal}]
In the $I = 1$ case, 
$\P''[K] = 1$
so 
\begin{align*}
    \P''[Q_1] - \P''[Q_2] 
    =
    \P''[Q_1|K] 
    - 
    \P''[Q_2|K]
    > 0,
\end{align*} 
where we used that, under $K\cap\nocoal$, the quartets $Q_1$ and $Q_2$ occur with equal probability under $\P''$. Since $x - 1/I = y - 1/I = 0$, the claim follows. 

For $I \geq 2$, \eqref{Eqn:TotalProbE-FG} and Lemma \ref{LemmaSymmetryBal} implies that
\begin{align*} 
\P''[Q_1] - \P''[Q_2] 
&\geq (\P''[Q_1|K] - \P''[Q_2|K]) \P''[K] \\
& \quad + \P''[G_{ab}] - \P''[G_{ac}]  + \left(\phi''_{+} - \phi''_{-} \right)\left(\P''[F_{ab}]+\P''[F_{cd}]\right)\\
& \quad - \left(\phi''_{+} - \phi''_{-} \right)\left(\P''[F_{ac}]+\P''[F_{bd}]\right)\\
& > \P''[G_{ab}] - \P''[G_{ac}] \\
&\quad + \left(\phi''_{+} - \phi''_{-} \right)\left(\P''[F_{ab}]-\P''[F_{ac}]-\P''[F_{bd}] + \P''[F_{cd}]\right),
\end{align*}
where again we used that, under $K\cap\nocoal$, the quartets $Q_1$ and $Q_2$ occur with equal probability.
If $I = 2$, then by Lemma \ref{phi-difference-lemma} and Lemma \ref{LemmaDifferencesBal} parts (a) and (c), this leads to 
\begin{align*}
    \P''[Q_1] - \P''[Q_2] &> \left(x - \frac{1}{I}\right) \wedge \left(y - \frac{1}{I}\right).
\end{align*}  
If $I \geq 3$, then by Lemma \ref{LemmaDifferencesBal} parts (b) and (d),
$$
\P''[Q_1] - \P''[Q_2] 
> 
\begin{cases} 1/8 &\mbox{if } x \land y\geq 1/2 \\
\frac{1}{12}\left(x-\frac{1}{I}\right)\wedge\left(y-\frac{1}{I}\right) &\mbox{if } x \land y\leq 1/2 \end{cases} $$
It follows that $\P''[Q_1] - \P''[Q_2]> \frac{1}{12}\left(x-\frac{1}{I}\right)\wedge\left(y-\frac{1}{I}\right)$, finishing the proof of the main claim in the balanced case.  
\end{proof}

\subsection{Caterpillar case}

We now consider the caterpillar case.
Without loss of generality, assume the species tree restricted to $\mathcal{Q}$ has topology $(((A,B),C),D)$.  Let $R$ be the most recent common ancestor of $A,B,C$ and let $I$ be the number of locus copies exiting $R$ (forward in time).  Let $\P''$ be the probability measure indicating conditioning on $I$ and $\mathcal{X}$.
Let $i_{x} \in \{1,...,I\}$ be the ancestral lineage of $x \in \{a,b,c\}$ in $R$.    
As with the balanced case, if $\mathcal{X} < \vec{1}$, then ASTRAL-one selects $Q_1,Q_2,Q_3$ each with probability $0$.  To prove \begin{align*}
    \P[Q_1] > \max\{\P[Q_2],\P[Q_3]\},
\end{align*} it is sufficent to prove Proposition \ref{PropClaimCat} below for $\mathcal{X} \geq \vec{1}$.
\begin{prop}[Quartet identifiability: Caterpillar case] 
\label{PropClaimCat}
Let $x = \P''[i_a = i_b]$.
On the events $I \geq 1$ and $\mathcal{X} \geq \vec{1}$, we have almost surely
\begin{align*}
    \P''[Q_1] - \P''[Q_2] > \frac{1}{3}\left(x - \frac{1}{I}\right).
\end{align*}
\end{prop}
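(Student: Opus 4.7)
The plan is to follow the strategy of the balanced case (Proposition~\ref{PropClaimBal}), adapted to the caterpillar setting in which we only track three lineages $i_a, i_b, i_c$ at $R$. I would first partition the probability space, given $I$ and $\mathcal{X} \geq \vec{1}$, by the coincidence pattern of these three variables into five analogous events: $E_3$ (all distinct), $F_{ab}, F_{ac}, F_{bc}$ (exactly one coincidence), and $K'$ (all three coinciding). By the independence of the birth--death processes on the disjoint $(A,B)$- and $C$-subtrees below $R$, together with the uniformity of $i_c$ on $\{1, \ldots, I\}$, the pair $(i_a, i_b)$ is conditionally independent of $i_c$ under $\P''$; this gives closed-form expressions for the event probabilities, and in particular $\P''[F_{ab}] - \P''[F_{ac}] = x - 1/I$, which is the source of the $x - 1/I$ factor in the desired bound.

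A key structural observation specific to the caterpillar case is that, below $R$, the only possible coalescence among $\{a,b,c\}$ is $(a,b)$: the copies $a, b$ both lie in the $(A,B)$-subtree while $c$ lies in the disjoint $C$-subtree. Consequently the analog of Lemma~\ref{LemmaCondEcoalBal} is needed only under $F_{ab}$ and $K'$; under $E_3, F_{ac}, F_{bc}$ the event $\nocoal$ holds automatically. The $E_3$ contribution vanishes by exchangeability of the three lineages at $R$, and the $F_{bc}$ contribution vanishes by the $b \leftrightarrow c$ symmetry, which exchanges $Q_1$ and $Q_2$ while leaving $F_{bc}$ invariant.

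Next I would exploit the fact that the dynamics above $R$ (including $d$'s lineage) are invariant under permutations of the labels attached to the three lineages at $R$. Setting $\phi_+ := \P''[Q_1 \mid F_{ab} \cap \nocoal] = \P''[Q_2 \mid F_{ac}]$ and $\phi_- := \P''[Q_2 \mid F_{ab} \cap \nocoal] = \P''[Q_1 \mid F_{ac}]$, with $\phi_+ + 2\phi_- = 1$ by the $a \leftrightarrow b$ symmetry under $F_{ab}$, and using that a below-$R$ coalescence under $F_{ab}$ forces $Q_1$, the combined $F_{ab}$ and $F_{ac}$ contribution telescopes to $(\phi_+ - \phi_-)(x - 1/I) + \P''[F_{ab} \cap \nocoal^c]\,(1 - (\phi_+ - \phi_-))$, which is at least $(\phi_+ - \phi_-)(x - 1/I)$. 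Adding the nonnegative $K'$ contribution (where $\nocoal^c$ forces $Q_1$ and $\nocoal$ gives each quartet probability $1/3$ by exchangeability) yields $\P''[Q_1] - \P''[Q_2] \geq (\phi_+ - \phi_-)(x - 1/I)$.

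The main obstacle is establishing the analog of Lemma~\ref{phi-difference-lemma}, namely $\phi_+ - \phi_- > 1/3$. I expect this to follow from a rooted-triple analysis of the three entities $(a,b), c, d$ in the locus tree together with a pendant-edge coalescence argument for the $(a,b)$-pair. The subtlety relative to the balanced case is that $d$'s lineage does not reside at $R$ but enters only at $R'$ (the MRCA of $A, B, C, D$); the three-rooted-triple symmetry must therefore be argued at the level of the $R'$-speciation, using the exchangeability of the locus branches at $R'$ and the fact that $d$'s branch at $R'$ is independent of, and uniform among, those descending from the $(A,B,C)$-side.
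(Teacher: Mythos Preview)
Your overall decomposition mirrors the paper's exactly: the paper partitions by the coincidence pattern of $i_a,i_b,i_c$ at $R$ into the events you call $E_3, F_{ab}, F_{ac}, F_{bc}, K'$ (it calls the two-lineage ones $G_{ab}, G_{ac}, G_{bc}$), drops $E_3$ and $G_{bc}$ by symmetry, computes $\P''[G_{ab}]-\P''[G_{ac}]=x-1/I$, defines $\psi''_+ = \P''[Q_1\mid G_{ab}\cap\nocoal]$ and $\psi''_- = \P''[Q_2\mid G_{ab}\cap\nocoal]$, and obtains the strict inequality from the $K$ contribution. Your observation that $\nocoal$ is automatic under $E_3, F_{ac}, F_{bc}$ (since only $a,b$ can coalesce below $R$) is a nice refinement that the paper does not state explicitly, and your telescoping with the extra $\P''[F_{ab}\cap\nocoal^c]$ term is correct.

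Where you should be more careful is the key lemma $\psi''_+ - \psi''_- \geq 1/3$. Your sketch anticipates a single rooted-triple argument on $(ab),c,d$ at $R'$ analogous to Lemma~\ref{phi-difference-lemma}, but this does not quite work: the loci $i_{ab}$ and $i_c$ may \emph{merge in the locus tree between $R$ and $S$} (a duplication event), so by the time $d$ enters at $S$ there may be only two entities, not three, and the three-topology symmetry you invoke is unavailable in that case. The paper's proof (its Lemma~10) therefore first conditions on the event $\Lambda$ that $i_{ab}$ and $i_c$ share a locus ancestor at $S$. Under $\Lambda$, the crucial bound is that $a,b$ coalesce below that merger with probability at least $1/2$, which follows from the daughter-edge forced-coalescence property of the b-MSC. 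Under $\Lambda^c$, the paper further conditions on whether $d$ shares a locus ancestor at $S$ with $ab$, with $c$, or with neither, and only in this last subcase does the uniform three-topology argument you describe apply. The arithmetic combining these cases then yields $\geq 1/3$. So your plan is right in spirit, but the case analysis above $R$ is two layers deeper than you indicate, and the daughter-edge coalescence mechanism plays a role you have not yet invoked.
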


Similarly to the balanced case, in order to prove this proposition we consider the following events: 
\begin{align}
    \nonumber E &= a - b - c \\
    \nonumber G_{ab} &= ab - c & G_{ac} &= ac - b & G_{bc} &= bc - a \\
    \nonumber K &= abc,
\end{align} 
where $-$ indicates separation of lineages in $R$ of the chosen copies of $A,B,C$.  Letting $\mathcal{E}$ run across all events, the law of total probability implies \begin{align} \label{eq:TotalProbECat}
    \P''[Q_i] = \sum_{\mathcal{E}}\P''[Q_i|\mathcal{E}]\P''[\mathcal{E}].
\end{align}
Let $\nocoal$ be the event that no coalescent event occurs beneath $R$ between the three lineage corresponding to $a, b, c$.   

Analogues to Lemmas
\ref{LemmaLeg1Copies},
\ref{LemmaCondEcoalBal},
\ref{LemmaAncBal},
\ref{LemmaDifferencesBal},
and 
\ref{LemmaSymmetryBal} 
hold with similar proofs.
\begin{lemma} 
\label{LemmaLeg1CopiesCat} 
Let $x = \P''[i_a = i_b]$.  On the events $\mathcal{X} \geq \vec{1}$ and $I \geq 1$,
we have almost surely 
\begin{align*}
x \geq \frac{1}{I}.
\end{align*}
\end{lemma}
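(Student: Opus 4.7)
The plan is to mirror the proof of Lemma \ref{LemmaLeg1Copies} essentially verbatim, with only notational changes reflecting that $R$ is now the most recent common ancestor of $\{A,B,C\}$ rather than the MRCA of the full quartet. Since $x = \P''[i_a = i_b]$ only depends on how the chosen copies of species $A$ and $B$ trace back to $R$, the fact that $R$ is defined using a third species $C$ plays no substantive role in the argument.

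Concretely, for $j \in \{1, \ldots, I\}$ I would let $N_j$ be the number of gene copies descending (forward in time) from the $j$-th locus lineage exiting $R$ that survive to the most recent common ancestor of $A$ and $B$. The independent branching structure of the birth--death process within the species tree, together with the conditioning on $\mathcal{X}$ and $I$, partitions the copies at MRCA($A,B$) into $I$ exchangeable classes of sizes $N_1,\ldots,N_I$. Conditionally on $(N_j)_j$, a uniformly chosen copy $a \in A$ has ancestral label $i_a \in \{1,\ldots,I\}$ distributed proportionally to $(N_j)_j$ by symmetry, and independently the analogous label $i_b$ for $b \in B$ has the same conditional distribution.

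The bound then follows from
\begin{equation*}
\P''[i_a = i_b] \;=\; \E''\!\left[\frac{\sum_{j=1}^{I} N_j^{2}}{\bigl(\sum_{j=1}^{I} N_j\bigr)^{2}}\right] \;\geq\; \frac{1}{I},
\end{equation*}
where the inequality is Cauchy--Schwarz applied pointwise inside the expectation. On the events $\mathcal{X} \geq \vec{1}$ and $I \geq 1$, each $N_j$ is nonnegative and the denominator is almost surely positive (since each of $A, B$ contains at least one surviving copy), so the ratio is well-defined, and the conclusion holds almost surely.

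I do not anticipate any essential obstacle: this is a direct adaptation of the balanced-case lemma, itself borrowed from \cite[Lemma~1]{Legried821439}. The only minor point requiring care is verifying that the conditional distribution of $i_a$ is indeed proportional to $(N_j)_j$, which follows from the independent-branching property of the birth--death process combined with uniform random selection of a copy within species $A$; exactly the same reasoning was used in the balanced case, and no new ingredient is introduced here.
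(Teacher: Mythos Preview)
Your proposal is correct and matches the paper's approach exactly: the paper does not even spell out a separate proof for this lemma, merely stating that it holds ``with similar proofs'' to Lemma~\ref{LemmaLeg1Copies}, and your write-up is precisely that adaptation (including the Cauchy--Schwarz step underlying \cite[Lemma~1]{Legried821439}).
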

\begin{lemma} \label{LemmaCondEcoalCat}
  Let the species tree be a rooted caterpillar on four leaves $A,B,C,D$.  For all $I \geq 1$ and $\mathcal{X} \geq \vec{1}$, and any event $\mathcal{E}\in\{E, G_{ab},G_{ac},G_{bc}\}$,
  \begin{align*}
      &\P''[Q_1|\mathcal{E}] \geq \P''[Q_1|\mathcal{E}\cap\nocoal]\quad\text{and}\quad \P''[Q_i|\mathcal{E}] \leq \P''[Q_i|\mathcal{E}\cap\nocoal], \quad i \in \{2,3\}.
  \end{align*}
\end{lemma}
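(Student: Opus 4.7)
The plan is to mirror the argument of Lemma~\ref{LemmaCondEcoalBal}, since the structure of the reasoning is identical once the geometry of the caterpillar is properly accounted for. The only genuinely new ingredient I need is the analogue of the sentence ``$Q_1$ is guaranteed under $\nocoal^c$.'' Everything else is a direct law-of-total-probability manipulation, and the event $K$ plays no role here because the three lineages $a,b,c$ enter $R$ from the same MRCA by assumption of $\mathcal{X}\geq\vec{1}$.

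First I would verify the key topological fact: in the caterpillar $(((A,B),C),D)$, with $R$ the MRCA of $A,B,C$, the only coalescence beneath $R$ that can involve the three tracked lineages $a,b,c$ is between $a$ and $b$ at the MRCA $R'$ of $A$ and $B$. Indeed, $c$'s lineage travels up the edge of $C$ until it reaches $R$ and shares no ancestral species-tree edge with either $a$ or $b$ below $R$. Consequently, on $\nocoal^c$ the lineages of $a$ and $b$ coalesce before reaching $R$, so $(a,b)$ is a cherry in the resulting gene tree. Whatever the subsequent coalescent events do, this forces the unrooted quartet topology $ab|cd$, i.e.~$Q_1$. In particular, $Q_2$ and $Q_3$ have conditional probability $0$ under $\nocoal^c$.

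With this in hand, the proof proceeds exactly as in the balanced case. For any $\mathcal{E}\in\{E,G_{ab},G_{ac},G_{bc}\}$, the law of total probability gives
\begin{equation*}
\P''[Q_1|\mathcal{E}] = \P''[\nocoal^c|\mathcal{E}] + \P''[Q_1|\mathcal{E}\cap\nocoal]\,\P''[\nocoal|\mathcal{E}] \geq \P''[Q_1|\mathcal{E}\cap\nocoal],
\end{equation*}
using that $\P''[Q_1|\mathcal{E}\cap\nocoal^c]=1$ by the previous paragraph. Similarly, for $i\in\{2,3\}$,
\begin{equation*}
\P''[Q_i|\mathcal{E}] = \P''[Q_i|\mathcal{E}\cap\nocoal]\,\P''[\nocoal|\mathcal{E}] \leq \P''[Q_i|\mathcal{E}\cap\nocoal],
\end{equation*}
using that $\P''[Q_i|\mathcal{E}\cap\nocoal^c]=0$.

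I do not expect a real obstacle here: the topological observation that only $a,b$ can coalesce beneath $R$ is essentially forced by the shape of the caterpillar, and the rest is bookkeeping. The one thing worth being careful about is that the conditioning event $\mathcal{E}$ only constrains the labels $(i_a,i_b,i_c)$ at $R$ and is measurable with respect to the portion of the process above $R$, so it is indeed compatible with decomposing by $\nocoal$ vs.~$\nocoal^c$ without disturbing the implication $\nocoal^c\Rightarrow Q_1$.
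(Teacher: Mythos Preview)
Your proposal is correct and follows exactly the approach the paper intends: the paper does not give a separate proof but simply states that the analogue of Lemma~\ref{LemmaCondEcoalBal} ``holds with a similar proof,'' and your argument is precisely that analogue, supplemented by the (correct) observation that in the caterpillar geometry the only coalescence among $a,b,c$ that can occur below $R$ is between $a$ and $b$, forcing $Q_1$ on $\nocoal^c$. Two minor expository quibbles that do not affect correctness: your stated reason for excluding $K$ is off (it is simply not in the lemma's list and is handled separately), and $\mathcal{E}$ is not literally measurable with respect to the process above $R$ since the labels $i_a,i_b,i_c$ depend on the locus tree below $R$; what matters, and what you use, is that the implication $\nocoal^c\Rightarrow Q_1$ holds deterministically regardless of $\mathcal{E}$.
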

\begin{lemma} \label{LemDecompONECat}
For $I \geq 2$ and any $\mathcal{X} \geq \vec{1}$, let $x = \P''[i_a = i_b]$.  Then the following hold:
\begin{align}
    \nonumber \P''[G_{ab}] &= \frac{I-1}{I}x \\
    \nonumber \P''[G_{ac}] = \P''[G_{bc}] &= \frac{1}{I}(1-x).
\end{align}
\end{lemma}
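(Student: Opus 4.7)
The plan is to follow the template of the analogous Lemma \ref{LemmaAncBal} from the balanced case. Two structural ingredients will do most of the work: the independence of $i_c$ from $(i_a,i_b)$ arising from the caterpillar topology below $R$, and the exchangeability of the $I$ lineages at $R$. Together these will reduce the $G$-event probabilities to simple set differences involving $K$.

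First, I will establish independence. Let $M^{X}_j$ denote the number of gene copies in species $X \in \{A,B,C\}$ that descend (in the locus tree) from the $j$-th lineage exiting $R$. By the caterpillar topology, $(M^{C}_j)_{j=1}^{I}$ is determined by the GDL process on the $R$-to-$C$ branch, while $(M^{A}_j, M^{B}_j)_{j=1}^{I}$ is determined by the GDL processes on the disjoint $R$-to-MRCA$(A,B)$ subtree. Independence of branching processes on disjoint branches then yields that $(M^{C}_j)_j$ is independent of $(M^{A}_j, M^{B}_j)_j$, and this independence is preserved under the conditioning on $I, \mathcal{A}, \mathcal{B}, \mathcal{C}$ (which factorizes along the $\{A,B\}$ versus $\{C\}$ partition). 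Since $i_c$ is drawn conditionally proportionally to $(M^{C}_j)_j$ and $(i_a,i_b)$ conditionally proportionally to $(M^{A}_j, M^{B}_j)_j$, exactly as in the proof of Lemma \ref{LemmaLeg1Copies}, it will follow that $i_c$ is independent of $(i_a,i_b)$ under $\P''$. Exchangeability of the $I$ lineages will then give $\P''[i_x = j] = 1/I$ for each $x \in \{a,b,c\}$ and $j \in \{1,\ldots,I\}$.

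Second, I will assemble the probabilities. Using independence and the uniform marginal of $i_c$,
\begin{align*}
    \P''[K] &= \sum_{j=1}^{I}\P''[i_a = i_b = j]\,\P''[i_c = j] = \frac{x}{I},\\
    \P''[i_a = i_c] &= \sum_{j=1}^{I}\P''[i_a = j]\,\P''[i_c = j] = \frac{1}{I},
\end{align*}
with the analogous identity for $\P''[i_b = i_c]$. Since $G_{ab}, G_{ac}, G_{bc}, K$ partition the pair-equality events (for instance $\{i_a = i_b\} = G_{ab} \sqcup K$), I can conclude
\begin{align*}
    \P''[G_{ab}] &= x - \frac{x}{I} = \frac{I-1}{I}x,\\
    \P''[G_{ac}] = \P''[G_{bc}] &= \frac{1}{I} - \frac{x}{I} = \frac{1-x}{I}.
\end{align*}

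The only genuine subtlety is verifying the independence of $i_c$ from $(i_a,i_b)$ under the conditional measure $\P''$, which rests on the disjointness of the $R$-to-$C$ branch from the $R$-to-MRCA$(A,B)$ subtree and the fact that the additional conditioning on $\mathcal{D}$ only affects the process above $R$; once that is in place, the remainder is routine bookkeeping.
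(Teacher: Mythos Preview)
Your proposal is correct and follows essentially the same approach the paper intends: the paper does not spell out a proof but indicates it is analogous to Lemma~\ref{LemmaAncBal}, and your argument rests on precisely the two structural facts implicit there---independence of $i_c$ from $(i_a,i_b)$ (from the disjoint subtrees below $R$ in the caterpillar) and the uniform marginal of $i_c$ (from exchangeability of the $I$ lineages). Your bookkeeping via $\P''[K]$ and set differences is a minor reorganization of the direct conditioning used in Lemma~\ref{LemmaAncBal}, not a genuinely different route.
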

\begin{lemma}\label{second-caterpillar-inequality}
  On $I\geq 1$, almost surely
  \begin{equation*}
    \P''[G_{ab}] - \P''[G_{ac}] = x-\frac{1}{I}.
  \end{equation*}
  \end{lemma}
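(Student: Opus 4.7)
The plan is to prove this lemma by a direct algebraic computation using Lemma \ref{LemDecompONECat}, after handling the degenerate case $I=1$ separately.

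First I would dispose of the $I=1$ case. When $I=1$ there is only one ancestral lineage at $R$, so $i_a = i_b = i_c = 1$ deterministically. Consequently both events $G_{ab} = \{ab - c\}$ and $G_{ac} = \{ac - b\}$ are impossible (they both require some lineage to be separate from the others), so $\P''[G_{ab}] = \P''[G_{ac}] = 0$. Moreover, $x = \P''[i_a = i_b] = 1 = 1/I$, so the right-hand side $x - 1/I$ also equals $0$, and the claimed equality holds almost surely.

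For $I \geq 2$, I would just plug in the formulas from Lemma \ref{LemDecompONECat}. We have $\P''[G_{ab}] = \frac{I-1}{I} x$ and $\P''[G_{ac}] = \frac{1}{I}(1-x)$, so
\begin{align*}
\P''[G_{ab}] - \P''[G_{ac}]
&= \frac{I-1}{I}\, x - \frac{1}{I}(1-x) \\
&= \frac{(I-1)x - (1-x)}{I} \\
&= \frac{Ix - 1}{I} \\
&= x - \frac{1}{I},
\end{align*}
which is precisely the claim. No additional hypotheses are needed beyond those already imposed in Lemma \ref{LemDecompONECat}, and Lemma \ref{LemmaLeg1CopiesCat} is not required here (though it would confirm that $x - 1/I \geq 0$, consistent with $\P''[G_{ab}] \geq \P''[G_{ac}]$).

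There is no real obstacle: once Lemma \ref{LemDecompONECat} is in hand, the proof reduces to one line of algebra. The only mild care needed is the $I=1$ edge case, since Lemma \ref{LemDecompONECat} is stated for $I \geq 2$; but as noted above, in that case both sides vanish because only $K$ can occur.
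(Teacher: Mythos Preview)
Your proof is correct and follows essentially the same approach as the paper, which does not give an explicit proof but indicates the lemma is an analogue of Lemma~\ref{LemmaDifferencesBal} with a similar proof---namely, plugging in the formulas from the caterpillar analogue of Lemma~\ref{LemmaAncBal} (i.e., Lemma~\ref{LemDecompONECat}) and simplifying. Your handling of the $I=1$ edge case is appropriate since Lemma~\ref{LemDecompONECat} is stated only for $I\geq 2$.
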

\begin{lemma}
 \label{LemmaSymmetryCat}
 For any $I$ and any $\mathcal{X} \geq \vec{1}$, we have \begin{align*}
     \P''[Q_{1}|G_{ab}\cap\nocoal] = \P''[Q_2|G_{ac}\cap\nocoal] := \psi''_{+}
 \end{align*} and \begin{align*}
     \P''[Q_{2}|G_{ab}\cap\nocoal] = \P''[Q_1|G_{ac}\cap\nocoal] := \psi''_{-}.
 \end{align*}
\end{lemma}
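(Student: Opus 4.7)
The plan is to establish both identities via a symmetry argument that mirrors the one invoked in Lemma~\ref{LemmaSymmetryBal}(a). The fundamental observation is that conditioning on $G_{ab}\cap\nocoal$ and on $G_{ac}\cap\nocoal$ produces the same abstract configuration at $R$: two locus lineages are involved, one carrying two uncoalesced gene lineages (the ``shared pair'') and the other carrying a single gene lineage, with the copy $a$ from species $A$ always sitting on the shared lineage. The only structural difference between the two conditioning events is the species label attached to the second shared copy (which is $b$ under $G_{ab}$ and $c$ under $G_{ac}$) and correspondingly to the single copy.

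First, I would verify that the conditional distribution of the relevant above-$R$ quantities agrees under both conditioning events. Three ingredients are needed: (i) given $I$, the locus tree structure above $R$ (including the branch leading to the $D$-subtree where $d$'s lineage lives) is independent of what happens below $R$, and hence independent of which of $G_{ab}$ or $G_{ac}$ one conditions on; (ii) by exchangeability of the $I$ locus lineages at $R$ with respect to their labeling, the conditional distribution of the ordered pair (shared locus lineage, single locus lineage) under each of $G_{ab}$ and $G_{ac}$ is uniform over ordered pairs of distinct indices, so the two conditioning events induce identical laws on this pair; and (iii) the two uncoalesced gene lineages sitting together on the shared locus lineage at $R$ are exchangeable under the Kingman coalescent evolving above $R$, since they enter the above-$R$ locus edge at the same point in time, namely at $R$ itself.

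Second, I would reinterpret the events $Q_i$ as label-free descriptions of the configuration. The event ``the shared pair forms a cherry in the gene tree'' coincides with $Q_1 = ab|cd$ under $G_{ab}$ (because the shared pair is $\{a,b\}$) and with $Q_2 = ac|bd$ under $G_{ac}$ (because the shared pair is $\{a,c\}$); declaring the common probability to be $\psi_+''$ yields the first identity. Similarly, the event ``the copy from species $A$ in the shared pair ends up as a sister of the single copy in the gene tree'' coincides with $Q_2$ under $G_{ab}$ (since $a$ would pair with $c$) and with $Q_1$ under $G_{ac}$ (since $a$ would pair with $b$); declaring this common probability to be $\psi_-''$ yields the second identity.

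The main point requiring care is step one: checking that conditioning on a below-$R$ event does not introduce asymmetry in the above-$R$ process. This reduces to the Markov property of the DLCoal construction at $R$ together with the label-exchangeability of the locus lineages at $R$, both of which follow directly from the model. The argument is strictly analogous to (and no more involved than) the symmetry invoked in the balanced case, so I expect the final proof to be quite short.
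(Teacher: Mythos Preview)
Your proposal is correct and takes essentially the same approach as the paper: the paper does not give a standalone proof of Lemma~\ref{LemmaSymmetryCat} but simply states that it holds ``with similar proofs'' to Lemma~\ref{LemmaSymmetryBal}(a), whose entire justification is the single phrase ``by symmetry.'' Your write-up is a careful unpacking of exactly this symmetry (independence of above-$R$ and below-$R$ structure given $I$, exchangeability of the $I$ locus labels, and exchangeability of the two uncoalesced shared-pair gene lineages), so there is no substantive difference in method, only in level of detail.
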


\subsubsection{The $G$ events}

The following lemma bounds the conditional probability difference for the $G$ events.
\begin{lemma}\label{first-caterpillar-inequality}
  On the events $I \geq 1$ and $\mathcal{X} \geq \vec{1}$, we have almost surely
\begin{align*}
  \psi''_{+}- \psi''_{-}\geq \frac{1}{3}.
\end{align*}
\end{lemma}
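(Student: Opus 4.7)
The plan is to condition on the daughter/mother assignment at the duplication $\delta$ in the locus tree where the locus lineages $L_{ab}$ (containing $\{a,b\}$ at $R$) and $L_c$ (containing $\{c\}$ at $R$) merge. By the model, each of $L_{ab}$ and $L_c$ is the daughter at $\delta$ with probability $1/2$, independently of the below-$R$ events that comprise $G_{ab}\cap\nocoal$. Let $R'$ denote the MRCA of $\{A,B,C,D\}$ in the species tree. Throughout I use the general fact that the unrooted quartet topology of a $4$-leaf binary gene tree is determined by the first coalescing pair among $\{a,b,c,d\}$: the pairs $(a,b)$ or $(c,d)$ give $Q_1$, the pairs $(a,c)$ or $(b,d)$ give $Q_2$, and the pairs $(a,d)$ or $(b,c)$ give $Q_3$.

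In Case~I, where $L_{ab}$ is the daughter at $\delta$, the daughter constraint forces the gene copies in the daughter edge to coalesce by $\delta$. In the principal subcase where $\delta$ lies on the branch from $R$ up to $R'$, the daughter edge contains only $a$ and $b$ among our tracked lineages (since $c\in L_c\ne L_{ab}$ throughout this edge and $d$ sits on the $D$-branch far from $L_{ab}$), so $(a,b)$ is necessarily the first of $\{a,b,c,d\}$ to coalesce, giving $Q_1$ with probability~$1$. In Case~II, where $L_c$ is the daughter at $\delta$, the constraint is vacuous since $c$ is alone, and I partition by where the first coalescence among $\{a,b,c,d\}$ occurs: (i) within $L_{ab}$'s pendant path before $\delta$, forcing $(a,b)$ first and hence $Q_1$; (ii) in the parent locus lineage of $\delta$ or a higher $(A,B,C)$-ancestor before $d$ joins the cluster, in which case Kingman exchangeability on $\{a,b,c\}$ makes the first coalescing pair uniform over $\{(a,b),(a,c),(b,c)\}$ and hence yields $Q_1,Q_2,Q_3$ each with probability~$1/3$; or (iii) only after $d$ has joined the same locus lineage as $\{a,b,c\}$, in which case all four lineages are exchangeable and the first pair is uniform over the six possibilities, again yielding each quartet with probability~$1/3$. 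Letting $\alpha$ denote the conditional probability of~(i) given Case~II, this gives $\P''[Q_1\mid\text{II}]-\P''[Q_2\mid\text{II}]=\alpha\ge 0$, so combining with Case~I one obtains $\psi''_+ - \psi''_- \ge \frac{1}{2}\cdot 1 + \frac{1}{2}\cdot\alpha \ge \frac{1}{2}\ge \frac{1}{3}$ in the principal scenario.

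The main technical subtlety is the edge case where $\delta$ lies above $R'$: the daughter edge then begins at $R'$ and may contain $d$'s lineage whenever $d$'s twin at $R'$ descends from the same locus ancestor above $R'$ as $L_{ab}$'s. In that event, the forced coalescence within the daughter in Case~I may produce $(a,d)$ or $(b,d)$ before $(a,b)$, and the analogous sub-case of Case~II must be refined to include $d$ when setting up (i)--(iii). I would handle this by further conditioning on whether $(a,b)$ coalesce via the unconstrained Kingman in the $R$-to-$R'$ portion of $L_{ab}$'s path (which restores the cherry $(a,b)$ and thus $Q_1$), and otherwise using Kingman exchangeability on $\{a,b,d\}$ inside the daughter edge together with the symmetry $\P''[Q_2\mid G_{ab}\cap\nocoal]=\P''[Q_3\mid G_{ab}\cap\nocoal]$ (from swapping $a\leftrightarrow b$) to bound the contribution from below. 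The hardest part will be showing that the combined contribution from these edge sub-cases does not drag the inequality below $1/3$; this requires a careful quantitative accounting of the probability of $d$ entering the daughter edge and of the Kingman coalescence probabilities along $L_{ab}$'s ancestral path.
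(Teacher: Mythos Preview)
Your analysis of the principal scenario (where the locus merge point $\delta$ of $L_{ab}$ and $L_c$ lies on the species-tree edge from $R$ to $R'$) is correct and closely parallels the paper's Case~1 (the event $\Lambda$ that $i_{ab}$ and $i_c$ share a common ancestral locus at $S=R'$). Both arguments use the uniform daughter assignment at $\delta$ to force the cherry $(a,b)$ with probability at least $1/2$, and your further refinement via subcases (i)--(iii) in Case~II is sound.

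The genuine gap is that the edge case $\delta>R'$ (equivalently $\Lambda^c$) is not a technicality: it is exactly where the bound $1/3$ is tight, and without a nontrivial lower bound there the argument fails. Writing $q=\P''[\Lambda\mid G_{ab},\nocoal]$, your principal-scenario estimate gives only $\psi''_+-\psi''_-\ge \tfrac12 q + (1-q)\cdot(?)$, and a bound of $0$ on the $\Lambda^c$ contribution yields merely $\tfrac12 q$, which can be arbitrarily small. Your sketch for the edge case also contains a confusion: the daughter edge at $\delta$ is a single locus-tree edge immediately below $\delta$, not something that ``begins at $R'$,'' and the exchangeability of $\{a,b,d\}$ you propose yields only a zero lower bound in the sub-case where $d$ shares the ancestral locus of $i_{ab}$ at $S$---it does not by itself produce the needed $1/3$.

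The paper resolves $\Lambda^c$ by a structured decomposition that your plan does not capture. It conditions on which (if any) of the ancestral loci $j_{ab},j_c$ at $S$ coincides with $d$'s ancestral locus $j_d$ (events $\mathcal{H}_{ab},\mathcal{H}_c,\mathcal{H}$), noting that $\P''[\mathcal{H}_{ab}\mid\Lambda^c]=\P''[\mathcal{H}_c\mid\Lambda^c]=\tfrac{1-r}{2}$ by symmetry. Under $\mathcal{H}_{ab}$ one gets only $\ge 0$ (this is your $\{a,b,d\}$-exchangeability observation), but under $\mathcal{H}_c$ the restricted locus topology above $S$ is $(ab,cd)$ and the daughter constraint at its root forces $Q_1$, giving a difference of $1$. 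Under $\mathcal{H}$ the three rooted triples on $\{ab,c,d\}$ are equally likely; the one with $ab$ as outgroup again forces $Q_1$ via the daughter constraint, and the other two give a difference of $p\ge 0$ by the same pendant-edge argument as in the balanced case. This yields a $\Lambda^c$ contribution of at least $\tfrac{1-r}{2}+\tfrac{r}{3}\ge \tfrac13$, which is what combines with the $\ge\tfrac12$ on $\Lambda$ to give the lemma. The crucial ingredients missing from your outline are the pairing of $\mathcal{H}_{ab}$ (difference $\ge 0$) with $\mathcal{H}_c$ (difference $=1$) via their equal probabilities, and the uniform-topology argument on $\{ab,c,d\}$ above $S$ under $\mathcal{H}$.
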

\begin{proof}
By definition of $\psi''_{+}$ and $\psi''_{-}$, it suffices to show that $\P''[Q_1|G_{ab}\cap\nocoal]-\P''[Q_2|G_{ab}\cap\nocoal] \geq 1/3$. 
\begin{figure}
    \centering
    \includegraphics[scale=.7]{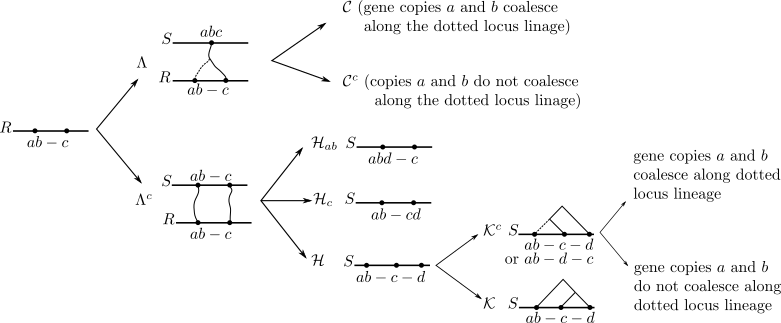}
    \caption{Flowchart for case analysis in Lemma~\ref{first-caterpillar-inequality}.}
    \label{fig:cat-diff}
\end{figure}
The proof of this inequality involves decomposing $G_{ab}\cap\nocoal$ into a number of subcases, depicted in Figure~\ref{fig:cat-diff}, and computing the probabilities of $Q_1$ and $Q_2$ in each subcase. Let $S$ be the most recent common ancestor of $\mathcal{Q}$ in the species tree. Let $\Lambda$ be the event that the $ab$ and $c$ individuals in $R$ descend from a common ancestor in $S$ and let $q=\P''[\Lambda|G_{ab}\cap\nocoal]$. There are two cases:
\begin{enumerate}
\item \textit{(Condition on $\Lambda$)}
Let $\mathcal{C}$ be the event that gene copies $a$ and $b$ coalesce above $R$ and below the MRCA of loci $i_{ab}$ and $i_{c}$.  Let $q' = \P''[\mathcal{C}|\Lambda,G_{ab},\nocoal]$.
We claim that $q'\geq 1/2$. To see this, observe that conditional on $G_{ab}\cap\nocoal$ the loci $i_{ab}$ and $i_{c}$ share the same ancestral locus at $S$ only if there occurred a duplication event between $S$ and $R$ which is ancestral to both of them. Therefore with probability at least $1/2$, the gene copies $a,b$ coalesce along their shared pendant edge in the rooted topology between $R$ and $S$, proving the claim. Furthermore, it is obvious that
\begin{equation}
\P''[Q_1|\mathcal{C},\Lambda,G_{ab},\nocoal]-\P''[Q_2|\mathcal{C},\Lambda,G_{ab},\nocoal] = 1.\label{eq:diff-FE}
\end{equation}
On the other hand, conditional on $\mathcal{C}^c$, the copies of $a,b$ and $c$ enter the same population and are then symmetric, and hence
\begin{equation}
\P''[Q_1|\mathcal{C}^c,\Lambda,G_{ab},\nocoal]-\P''[Q_2|\mathcal{C}^c,\Lambda,G_{ab},\nocoal] = 0.\label{eq:diff-F'E}
\end{equation}

\item \textit{(Condition on $\Lambda^c$)} 
Let $\mathcal{H}_{j}$ be the event that copies $d$ and $j$ share the same ancestor in the locus tree at $S$, and define $\mathcal{H}=(\mathcal{H}_{ab}\cup \mathcal{H}_{c})^c$ and  $r= \P''[\mathcal{H}|\Lambda^c,G_{ab},\nocoal]$. Then by symmetry, $\P''[\mathcal{H}_{ab}|\Lambda^c,G_{ab},\nocoal]= \P''[\mathcal{H}_{c}|\Lambda^c,G_{ab},\nocoal]=\frac{1-r}{2}$.
By a further symmetry argument similar to that made in Case 1 we have
\begin{equation}
\P''[Q_1|\mathcal{H}_{ab},\Lambda^c,G_{ab},\nocoal]- \P''[Q_2|\mathcal{H}_{ab},\Lambda^c,G_{ab},\nocoal]\geq 0,\label{eq:diff-HabE'}
\end{equation}
where the inequality accounts for the possibility that the lineages from $a$ and $b$ coalesce between $R$ and $S$.
Let $\tau$ be the topology of the locus tree restricted to the copies $ab,c,d$ and restricted to the portion above $S$ (and suppressing nodes of degree 2). Conditioned on $\mathcal{H}_{c}$, we have $\tau = (ab,cd)$, so it must be the case that either $ab$ descends from a daughter lineage or $cd$ descends from a daughter lineage, meaning we observe $Q_1$ with probability $1$ and $Q_2$ with probability $0$. Therefore
\begin{equation}
\P''[Q_1|\mathcal{H}_{c},\Lambda^c,G_{ab},\nocoal]- \P''[Q_2|\mathcal{H}_{c},\Lambda^c,G_{ab},\nocoal]= 1.\label{eq:diff-HcE'}
\end{equation}
It remains to consider the case $\mathcal{H} = (\mathcal{H}_{ab}\cup \mathcal{H}_{c})^c$. Let $\mathcal{K}$ be the event that $\tau=(ab,(c,d))$. By symmetry, the three possible topologies are equally likely, so $\P''[\mathcal{K}|\mathcal{H},\Lambda^c,G_{ab},\nocoal] = 1/3$. Conditioned on $\mathcal{K}$, either $ab$ descends from a daughter lineage or the pair $(c,d)$ descends from a daughter lineage, and therefore
\begin{equation}
\P''[Q_1|\mathcal{K},\mathcal{H},\Lambda^c,G_{ab},\nocoal]- \P''[Q_2|\mathcal{K},\mathcal{H},\Lambda^c,G_{ab},\nocoal]= 1.\label{eq:diff-KHE'}
\end{equation}
Conditioned on $\mathcal{K}^c$, let $p$ denote the probability that gene copies $a$ and $b$ coalesce along the pendant edge for $ab$ in the rooted triple above $S$. Then $$\P''[Q_1|\mathcal{K}^c,\mathcal{H},\Lambda^c,G_{ab},\nocoal]= p+\frac{1}{3}(1-p),$$ 
and 
$$\P''[Q_2|\mathcal{K}^c,\mathcal{H},\Lambda^c,G_{ab},\nocoal] = \frac{1}{3}(1-p),$$ 
and hence
\begin{equation}
\P''[Q_1|\mathcal{K}^c,\mathcal{H},\Lambda^c,G_{ab},\nocoal]- \P''[Q_2|\mathcal{K}^c,\mathcal{H},\Lambda^c,G_{ab},\nocoal]\geq p,\label{eq:diff-K'HE'}
\end{equation}
where again the inequality accounts for the possibility that the lineages from $a$ and $b$ coalesce between $R$ and $S$.

\end{enumerate}

Finally, applying the law of total probability and using equations (\ref{eq:diff-FE})-(\ref{eq:diff-K'HE'}) gives
\begin{align*}
  \P''[Q_1|G_{ab},\nocoal]-\P''[Q_2|G_{ab},\nocoal]
  &\geq q'q+ \left(\frac{1-r}{2} + \frac{1}{3}r + \frac{2}{3}pr\right)(1-q) \\
  &\geq \frac{1}{2}q + \frac{1}{3}(1-q) = \frac{1}{3},
\end{align*}
where the inequality follows from $q'\geq 1/2$ and $r\geq0$. 
\end{proof}

\subsubsection{Proofs of Proposition \ref{PropClaimCat}
and Theorems \ref{TheoremDLCoalIdent} and \ref{TheoremDLCoalONE}}

With that, the proof of Proposition \ref{PropClaimCat}
is similar to that of Proposition \ref{PropClaimBal}.

In both the balanced and caterpillar cases, observe that $\P''[Q_1] -\P''[Q_3] = \P''[Q_1] - \P''[Q_2]$ by switching the roles of $c$ and $d$.  By Propositions \ref{PropClaimBal} and \ref{PropClaimCat}, all species quartet topologies are identifiable and hence
we have verified Theorem \ref{TheoremDLCoalIdent}.

Theorem \ref{TheoremDLCoalONE} then follows, along similar lines as \cite[Theorem 2]{Legried821439}, from the law of large numbers.

\subsection{Proof of consistency for ASTRAL-multi}

Before finishing the proof of Theorem~\ref{TheoremSampleONE}, we give a proof of Theorem~\ref{TheoremDLCoalMulti}.

\begin{proof}[Proof of Theorem~\ref{TheoremDLCoalMulti}]
Let $\mathcal{N}_{AB|CD}$ (respectively $\mathcal{N}_{AC|BD},\mathcal{N}_{AD|BC}$) be the number of choices consisting of one gene copy in the gene tree from each species in $\mathcal{Q} = \{A,B,C,D\}$ whose corresponding restriction in $t_1$ agrees with $AB|CD$ (respectively $AC|BD$, $AD|BC$).  Similarly to \cite[Theorem 2]{Legried821439}, it suffices to show that 
\begin{align}
\label{eq:multi-expec}
\E[\mathcal{N}_{AB|CD}] > \max\left\{\E[\mathcal{N}_{AC|BD}],\E[\mathcal{N}_{AD|BC}]\right\}.
\end{align} 

Letting again $\mathcal{X} = (\mathcal{A}, \mathcal{B}, \mathcal{C}, \mathcal{D})$, by taking expectation with respect to $I$ in Propositions \ref{PropClaimBal} and \ref{PropClaimCat}, we have on the event $\mathcal{X} \geq \vec{1}$ that
\begin{align}
\label{eq:multi-props}
     \P[q = AB|CD\,|\,\mathcal{X}] > \max\{\P[q = AC|BD\,|\,\mathcal{X}],
     \P[q = AD|BC\,|\,\mathcal{X}]\},
\end{align}
where $q$ is the topology of a uniformly chosen quartet among $A, B, C, D$. Let $\mathcal{M} = \mathcal{A}\mathcal{B}\mathcal{C}\mathcal{D}$ be the number of quartet choices and let $q_i, i=1,\ldots,\mathcal{M}$ be the corresponding topologies ordered arbitrarily. Because $q$ is a uniform choice, we have
\begin{align}
\label{eq:multi-unif}
    \P[q = AB|CD\,|\,\mathcal{X}]
    = \frac{1}{\mathcal{M}} \sum_{i=1}^{\mathcal{M}} \P[q_i = AB|CD\,|\,\mathcal{X}],
\end{align}
and similarly for the other topologies. Since 
\begin{align*}
    \mathcal{N}_{AB|CD}
    = \sum_{i=1}^{\mathcal{M}} \mathbf{1}\{q_i = AB|CD\},
\end{align*}
and similarly for the other topologies, taking expectations and using~\eqref{eq:multi-props} and~\eqref{eq:multi-unif} gives
\eqref{eq:multi-expec} as claimed.
\end{proof}

\section{Proof of sample complexity bound}
\label{section:sample}

To prove Theorem~\ref{TheoremSampleONE}, our sample complexity result for ASTRAL-one, we use a union bound over all quartets and build on the analysis of Section~\ref{section:first-step}.  In particular, the key step of the proof is a more careful analysis of the events $K$ that appeared in the proof of Theorem~\ref{TheoremDLCoalIdent}. We first discuss a number of quantities that play an important role in the analysis.

\subsection{Bounds on branching process quantities}

We highlight the role of a number of parameters in the sample complexity: the shortest branch length in the species tree, $f$; the depth of the species tree, $\Delta$; and the duplication and loss rates, $\lambda$ and $\mu$.  These parameters enter the analysis through three quantities of significance: 
\begin{itemize}
    \item \textit{Coalescence of a pair of lineages on an edge:} In the standard coalescent, the probability that a pair of lineages has coalesced by time $f$ is
    \begin{align*}
        \gamma = 1 - e^{-f}.
    \end{align*}

    \item \textit{Survival probability of a quartet:} For a quartet $\mathcal{Q} = \{A, B, C,D\}$, let $\mathcal{X}_\mathcal{Q} = (\mathcal{A}, \mathcal{B}, \mathcal{C}, \mathcal{D})$ be the number of gene copies in the corresponding species.  
    The smallest probability over all quartets that a gene family contains a copy in each species will be denoted by
\begin{align*}
    \sigma
    := \min_{\mathcal{Q}}
    \P\left[\mathcal{X}_{\mathcal{Q}} \geq \vec{1}\right].
\end{align*}

    \item \textit{Expected number of lineages at a vertex:} For any vertex $R$ in the species tree, let $I_R$ be the number of copies at $R$ in a single gene family. The largest expectation of $I_R$ over all vertices will be denoted by
    \begin{align*}
        \alpha = 
        \max_{R} \E[I_R].
    \end{align*}
\end{itemize}

These last two quantities can be controlled using branching process theory. See e.g.~\cite[Section 9.2]{steelbook2016} for the relevant results in the phylogenetic context. We use the notation $z_1 \lor z_2 = \max\{z_1, z_2\}$.
\begin{lemma}
  \label{lemma:sigma-bound} The following hold:
  \begin{itemize}
    \item When $\mu > \lambda$,
    \begin{align*}
      \sigma \geq  \left[ \frac{1}{e^{(\mu - \lambda)\Delta}} \left(1 -  \frac{\lambda}{\mu} \right) \right]^4.
  \end{align*}    
  \item When $\lambda > \mu$,
    \begin{align*}
      \sigma \geq  \left[ 1 - \frac{\mu}{\lambda}\right]^4 .
  \end{align*}    
  \end{itemize}
  
\end{lemma}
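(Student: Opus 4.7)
The plan is to bound $\sigma$ for each quartet $\mathcal{Q}$ by (i) estimating the marginal survival probability at each individual leaf through classical branching-process theory, and (ii) combining the four marginals via a positive correlation argument to recover the fourth-power form of the stated bounds.

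For the marginal step, observe that the DLCoal dynamics pass each existing lineage unchanged into both descendants at every speciation. Hence, along any root-to-leaf path, the count of gene copies evolves as a single linear birth-death process with rates $\lambda,\mu$ started from one individual, and the number of copies at a leaf $v$ of depth $\Delta_v \leq \Delta$ is marginally distributed as the population size at time $\Delta_v$. The classical extinction probability
\begin{equation*}
p_0(t) = \frac{\mu\bigl(e^{(\lambda-\mu)t}-1\bigr)}{\lambda\, e^{(\lambda-\mu)t}-\mu}
\end{equation*}
gives $\P[v\text{ has a copy}] = 1 - p_0(\Delta_v)$. In the subcritical regime $\mu>\lambda$, rewriting
\begin{equation*}
1-p_0(t) = \frac{(1-\lambda/\mu)\,e^{-(\mu-\lambda)t}}{1-(\lambda/\mu)e^{-(\mu-\lambda)t}} > (1-\lambda/\mu)\,e^{-(\mu-\lambda)t},
\end{equation*}
combined with $\Delta_v \leq \Delta$ and monotonicity of $p_0(t)$ in $t$, yields $\P[v\text{ has a copy}] \geq (1-\lambda/\mu)/e^{(\mu-\lambda)\Delta}$. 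In the supercritical regime $\lambda>\mu$, the extinction probability is nondecreasing in $t$ but is bounded above by its limit $\mu/\lambda$, giving $\P[v\text{ has a copy}] \geq 1-\mu/\lambda$ uniformly.

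For the combining step, I would prove the positive correlation bound
\begin{equation*}
\P[\mathcal{X}_{\mathcal{Q}}\geq\vec{1}] \geq \prod_{v\in\mathcal{Q}}\P[v\text{ has a copy}]
\end{equation*}
by induction on the species tree. More precisely, for any subtree $S$ and any subset $\mathcal{Q}_S$ of its leaves, I would show that $k \mapsto \P[\text{every leaf in }\mathcal{Q}_S\text{ has a copy}\mid k\text{ copies entering }S]$ is nondecreasing in $k$ and bounded below by the analogous product over single leaves. The base case $|\mathcal{Q}_S|=1$ is trivial; at an internal speciation with daughter subtrees $S_1,S_2$, the branching property makes the two descendant subprocesses conditionally independent given the count $K'$ at the speciation, after which one applies the covariance (Chebyshev sum) inequality for increasing functions of the single random variable $K'$ to split the joint probability across sibling subtrees and iterates the inductive hypothesis. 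Instantiating at the root of $T$ with one initial copy and taking the minimum over quartets combines with the marginal bounds to give both inequalities in the lemma.

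The main non-routine ingredient is the positive correlation step. Fortunately, the two necessary properties---exact conditional independence of sibling subtrees given $K'$ and monotonicity of descendant survival in the starting count---both hold by the branching property alone, so the induction requires only repeated use of the covariance inequality and no delicate estimates. Standard references for this type of FKG-style argument for branching processes on trees (see e.g.~Section~9.2 of~\cite{steelbook2016}) may be cited to shorten the writeup.
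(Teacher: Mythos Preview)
Your argument is correct; the marginal step matches the paper's computation exactly (the paper writes the survival probability as $1-\frac{\mu}{\lambda}q(\Delta)$, which equals your $1-p_0(\Delta)$). The difference lies in how the four marginals are combined into the fourth-power bound. You propose an FKG-style positive-correlation inequality, proved by induction on the tree using conditional independence of sibling subtrees and the covariance inequality for monotone functions of the count at each speciation. The paper instead uses a direct sequential-conditioning argument: once $\{\mathcal{A}\geq 1\}$ holds, at least one copy exists at every vertex on the root-to-$A$ path, in particular at the branch point toward $D$; from one surviving copy there, $\P[\mathcal{D}\geq 1\mid\mathcal{A}\geq 1]\geq 1-\tfrac{\mu}{\lambda}q(\Delta)$ by monotonicity, and the same reasoning is iterated for $B$ and $C$. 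Your route is more structural and yields the general product inequality $\P[\mathcal{X}_{\mathcal{Q}}\geq\vec{1}]\geq\prod_{v}\P[v\text{ has a copy}]$ as a byproduct; the paper's route is shorter and avoids setting up the induction, needing only the observation that survival at a leaf forces nonzero counts along its ancestral path. Both arrive at the same numerical bound.
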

\begin{proof}
Let $\mathcal{Q} = \{A, B, C,D\}$ be a quartet and let as before $\mathcal{X}_\mathcal{Q} = (\mathcal{A}, \mathcal{B}, \mathcal{C}, \mathcal{D})$ be the number of gene copies in the corresponding species. Assume the species tree topology on $\mathcal{Q}$ is balanced (the argument in the caterpillar case being similar). The probability that $\mathcal{A} \geq 1$ is given by
\begin{align*}
    \P[\mathcal{A} \geq 1]
    = 1 - \frac{\mu}{\lambda} q(\Delta),
\end{align*}
where
\begin{align*}
    q(t) 
    =
    \lambda
    \frac{1 
    - 
    e^{-(\lambda - \mu) t}}
    {\lambda 
    - 
    \mu e^{-(\lambda - \mu) t}}.
\end{align*}
It can be checked that, whether $\mu > \lambda$ or $\mu < \lambda$, the function $q(t)$ is increasing in $t$. Conditioned on $\{\mathcal{A} \geq 1\}$, there is at least one copy in each vertex along the path between the root and $A$. Hence
\begin{align*}
    \P[\mathcal{D} \geq 1\,|\, \mathcal{A} \geq 1] \geq 1 - \frac{\mu}{\lambda} q(\Delta). 
\end{align*}
Repeating this argument for $B$ and $C$ gives
\begin{align*}
    \P\left[\mathcal{X}_{\mathcal{Q}} \geq \vec{1}\right]
    \geq \left(1 - \frac{\mu}{\lambda} q(\Delta)\right)^4.
\end{align*}
It remains to bound the right-hand side. When $\lambda > \mu$, $q(t) \to 1$ as $t \to +\infty$, which implies $q(t) \leq 1$ by monotonicity. So $1 - \frac{\mu}{\lambda} q(\Delta) \geq 1 - \frac{\mu}{\lambda}$. On the other hand, when $\mu > \lambda$, 
\begin{align*}
    1 - \frac{\mu}{\lambda} q(\Delta)
    &=
    \lambda
    \frac{1 
    - 
    e^{-(\lambda - \mu) \Delta}}
    {\lambda 
    - 
    \mu e^{-(\lambda - \mu) \Delta}}\\
    &= \frac{\mu - \lambda}{\mu e^{(\mu - \lambda)\Delta} - \lambda}\\
    &\geq \frac{1}{e^{(\mu - \lambda)\Delta}} \left(1 -  \frac{\lambda}{\mu} \right).
\end{align*}
\end{proof}

\begin{lemma}
  \label{lemma:alpha-bound} We have
  \begin{align*}
      \alpha \leq 1 \lor e^{(\lambda - \mu) \Delta}.
  \end{align*}
\end{lemma}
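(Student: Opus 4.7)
The plan is to reduce $\E[I_R]$ to the mean of a standard linear birth-death process and then optimize over the depth of $R$.

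First, I would fix a vertex $R$ in the species tree and let $t_R \in [0,\Delta]$ denote its distance from the root (in the same time units as $\lambda$ and $\mu$). Since the duplication-loss dynamics along each edge are independent across lineages and, at each speciation, every surviving copy just continues to evolve independently in the descendant edges, the random variable $I_R$ has the same distribution as $Z_{t_R}$, where $(Z_t)_{t \geq 0}$ is a continuous-time linear birth-death process with birth rate $\lambda$, death rate $\mu$, and $Z_0=1$. Note that the mother/daughter labeling introduced in the DLCoal definition only affects the coalescent conditioning in the second stage of the model and does not change the law of the (unpruned or pruned) copy counts; moreover, pruning lost copies does not alter $Z_{t_R}$ because $Z_{t_R}$ already counts only lineages alive at time $t_R$.

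Second, I would invoke the classical formula $\E[Z_t] = e^{(\lambda - \mu)t}$, which follows immediately from the Kolmogorov forward equation $\frac{d}{dt}\E[Z_t] = (\lambda - \mu)\E[Z_t]$ with $\E[Z_0]=1$ (see, e.g., the references cited in the paper for linear birth-death processes). Hence
\begin{equation*}
    \E[I_R] = e^{(\lambda - \mu) t_R}.
\end{equation*}

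Finally, I would take the maximum over $R$. The exponent $(\lambda - \mu)t_R$ is monotone in $t_R$ with sign equal to that of $\lambda - \mu$, so:
\begin{itemize}
    \item if $\lambda \geq \mu$, then $e^{(\lambda - \mu)t_R} \leq e^{(\lambda - \mu)\Delta}$ for every $R$, since $t_R \leq \Delta$;
    \item if $\lambda < \mu$, then $e^{(\lambda - \mu)t_R} \leq e^{0} = 1$, attained in the limit at the root.
\end{itemize}
Combining the two cases gives $\alpha = \max_R \E[I_R] \leq 1 \lor e^{(\lambda - \mu)\Delta}$, as desired.

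There is no real obstacle here beyond making sure that the identification $I_R \stackrel{d}{=} Z_{t_R}$ is justified in the presence of speciation events; the key point, as noted above, is that speciation in the DLCoal locus-tree step just splits a single birth-death process into independent sub-processes on descendant edges, so the marginal law of the count at any fixed vertex is the same as in an unbroken linear birth-death process of total length $t_R$.
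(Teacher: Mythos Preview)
Your proposal is correct and follows essentially the same approach as the paper: identify $\E[I_R]$ with $e^{(\lambda-\mu)t_R}$ via the mean of a linear birth-death process started from one lineage, then split into cases according to the sign of $\lambda-\mu$ and use $t_R \leq \Delta$. Your write-up simply adds more justification (e.g.\ that speciation and the mother/daughter labeling do not affect the marginal law of the copy count at $R$) for steps the paper leaves implicit.
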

\begin{proof}
Recall that we assume there is a single lineage at the top pendant vertex of the species tree. If the time elapsed between this vertex and another vertex $U$ is $d$, then the expectation number of lineages at $U$ is $e^{(\lambda - \mu)d}$.   The result follows from the fact that $d \leq \Delta$ by considering separately the cases $\mu > \lambda$ and $\lambda > \mu$.
\end{proof}

\subsection{Sufficient effective number of samples}

For a quartet $\mathcal{Q}$,
let $\mathcal{K}_{\mathcal{Q}}$
be the set of gene trees such that
each species in $\mathcal{Q}$ has at least one gene copy. 
For $k^* \geq 1$, let
\begin{align*}
\mathcal{S}_{k^*}
    = 
    \left\{
    |\mathcal{K}_{\mathcal{Q}}| \geq k^*
    : \forall \mathcal{Q}
    \right\}.
\end{align*}

\begin{lemma}
\label{lemma:kstar}
For any $k^* \geq 1$ and $\epsilon \in (0,1)$, it holds that $\P[\mathcal{S}_{k^*}] \geq 1-\epsilon$ provided
\begin{align*}
    k \geq \left\{\frac{2 k^*}{\sigma}\right\}
    \lor
    \left\{\frac{8}{\sigma^2} \log \frac{n}{\epsilon}\right\}.
\end{align*}
\end{lemma}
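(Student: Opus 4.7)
The plan is a routine concentration-plus-union-bound argument. For each quartet $\mathcal{Q}$, since the gene trees are drawn i.i.d.\ under the DLCoal model, the random variable
\begin{equation*}
|\mathcal{K}_\mathcal{Q}| = \sum_{i=1}^{k} \mathbf{1}\{\mathcal{X}_\mathcal{Q}^{(i)}\geq\vec{1}\}
\end{equation*}
is a sum of $k$ i.i.d.\ Bernoulli summands, each with success probability at least $\sigma$ by the definition of $\sigma$. In particular, $|\mathcal{K}_\mathcal{Q}|$ stochastically dominates a $\mathrm{Binomial}(k,\sigma)$ random variable with mean $k\sigma$.

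Under the first hypothesis $k\geq 2k^*/\sigma$ we have $k^*\leq k\sigma/2$, so the event $\{|\mathcal{K}_\mathcal{Q}|<k^*\}$ is contained in a lower-tail deviation of at least $k\sigma/2$ below the dominating mean. Hoeffding's inequality applied to the centered $[0,1]$-valued summands then gives
\begin{equation*}
\P[|\mathcal{K}_\mathcal{Q}|<k^*] \;\leq\; \exp\!\left(-\tfrac{1}{2}\,k\sigma^2\right).
\end{equation*}
Taking a union bound over all $\binom{n}{4}\leq n^4$ quartets yields
\begin{equation*}
\P[\mathcal{S}_{k^*}^c] \;\leq\; n^4\exp\!\left(-\tfrac{1}{2}\,k\sigma^2\right),
\end{equation*}
and the second hypothesis $k\geq (8/\sigma^2)\log(n/\epsilon)$ gives $k\sigma^2/2\geq 4\log(n/\epsilon) = 4\log n + 4\log(1/\epsilon)\geq 4\log n + \log(1/\epsilon)$, which is exactly what is needed to make the right-hand side at most $\epsilon$.

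There is no substantive obstacle in this proof beyond bookkeeping with constants. The only point worth flagging is that the two hypotheses serve distinct purposes: $k\geq 2k^*/\sigma$ pushes the threshold $k^*$ below half of the dominating binomial's mean, which is what brings the event $\{|\mathcal{K}_\mathcal{Q}|<k^*\}$ into the regime where Hoeffding's bound delivers exponential decay; the hypothesis $k\geq (8/\sigma^2)\log(n/\epsilon)$ then furnishes enough decay in that exponential tail to absorb the $n^4$ factor from the union bound over quartets. The appearance of $\sigma^2$ (rather than $\sigma$) in the hypothesis reflects the use of Hoeffding's inequality rather than a multiplicative Chernoff bound, which is a deliberately slightly loose but simple and sufficient choice here.
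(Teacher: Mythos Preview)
Your proof is correct and essentially identical to the paper's own argument: both use the first hypothesis to place $k^*$ below $k\sigma/2$, apply Hoeffding's inequality to obtain the bound $\exp(-k\sigma^2/2)$ per quartet, take a union bound over at most $n^4$ quartets, and then verify that the second hypothesis makes $n^4\exp(-k\sigma^2/2)\le\epsilon$. The only cosmetic difference is that you phrase the first step via stochastic domination by a $\mathrm{Binomial}(k,\sigma)$ variable, whereas the paper works directly with $\E|\mathcal{K}_{\mathcal{Q}}|\geq k\sigma$.
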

\begin{proof}
For any $\mathcal{Q}$, by the definition of $\mathcal{K}_{\mathcal{Q}}$,
if $k$ is the number of loci then
\begin{align*}
    \E|\mathcal{K}_{\mathcal{Q}}| \geq k \sigma.
\end{align*}
Assume assume $k$ is large enough that $\frac{1}{2} k\sigma \geq k^*$. Then by Hoeffding's inequality (see, e.g. \cite{Vershynin:2018}),
\begin{align*}
    \P\left[
    \mathcal{S}_{k^*}^c
    \right]
    &\leq \sum_{\mathcal{Q}}
    \P\left[|\mathcal{K}_{\mathcal{Q}}| < k^* \right]\\    &\leq \sum_{\mathcal{Q}}
    \P\left[|\mathcal{K}_{\mathcal{Q}}| < \frac{1}{2} k \sigma \right]\\
    &\leq \sum_{\mathcal{Q}}
    \P\left[\E|\mathcal{K}_{\mathcal{Q}}| - |\mathcal{K}_{\mathcal{Q}}| > \frac{1}{2} k\sigma\right]\\
    &\leq n^4 \exp\left(
    - 2 \frac{(k\sigma/2)^2}{k}
    \right)\\
    &\leq \epsilon,
\end{align*}
if
\begin{align*}
    k \geq  \frac{2}{\sigma^2} \log \frac{n^4}{\epsilon},
\end{align*}
and since $\epsilon<1$ this inequality holds whenever
\begin{align*}
    k \geq  \frac{8}{\sigma^2} \log \frac{n}{\epsilon}.
\end{align*}
That proves the claim.
\end{proof}

\subsection{The $K$ event}

Using the notation of Section~\ref{section:first-step}, fix a quartet of species $\mathcal{Q} = \{A,B,C,D\}$,
let $\P'$ denote the conditional probability given the events $\{\mathcal{X}_\mathcal{Q} \geq \vec{1}\}$, and define $\delta' = \P'[Q_1] - 1/3$.  Since $\P'[Q_2] = \P'[Q_3]$, we have \begin{align}
\label{eq:k-deltap}
    \delta' = \P'[Q_1] - \frac{\P'[Q_1] + \P'[Q_2] + \P'[Q_3]}{3} = \frac{2}{3}\left(\P'[Q_1] - \P'[Q_2]\right)
\end{align}
We seek to bound the right-hand side.

Assume first that the species tree restricted to $\mathcal{Q}$ is balanced. Letting $\P'_i$ indicate $\P'$ conditioned on $\{I=i\}$,
by the proof of Proposition~\ref{PropClaimBal} we have
\begin{align*}
    \P'_i[Q_1] - \P'_i[Q_2] 
    &\geq (\P'_i[Q_1|K] - \P'_i[Q_2|K]) \P'_i[K]
\end{align*}
where we took expectations over $\mathcal{X}_\mathcal{Q}$. Because on the event $\nocoal$, $Q_1$ and $Q_2$ are equally likely by symmetry, we are left with 
\begin{align*}
    \P'_i[Q_1] - \P'_i[Q_2] 
    &\geq \P'_i[K \cap \nocoal^c].
\end{align*}
To bound the right-hand side, 
we consider the event $\mathcal{C}_{ab}$ that the lineages picked from $A$ and $B$ coalesce below $R$.
Notice in particular that $\mathcal{C}_{ab}$ implies $\{i_a = i_b\}$. The event
$K \cap \nocoal^c$ is implied by $\mathcal{C}_{ab}$ together with $\{i_c = i_d = i_a\}$, which are conditionally independent. By Lemma~\ref{LemmaLeg1Copies}, the latter has probability at least $\P'_i[i_c = i_d] \frac{1}{i} \geq \frac{1}{i^2}$. Hence,
\begin{align}
\label{eq:k-cabBal}
    \P'_i[Q_1] - \P'_i[Q_2] 
    &\geq  \P'_i[\mathcal{C}_{ab}]\, \frac{1}{i^2}.
\end{align}

By a similar argument in the
caterpillar case, we have
\begin{align}
\label{eq:k-cabCat}
    \P'_i[Q_1] - \P'_i[Q_2] 
    &\geq \P'_i[K \cap \nocoal^c]\nonumber\\
    &\geq \P'_i[\mathcal{C}_{ab}]\, \frac{1}{i}\nonumber\\    
    &\geq \P'_i[\mathcal{C}_{ab}]\, \frac{1}{i^2}.
\end{align}

It remains to bound $\P'_i[\mathcal{C}_{ab}]$.
\begin{lemma}
\label{lemma:cab}
We have
\begin{align*}
    \P'_i[\mathcal{C}_{ab}]
    \geq \left\{
    \gamma \land \frac{1}{8}
    \right\}
    \frac{1}{i}.
\end{align*}
\end{lemma}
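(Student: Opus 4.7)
The plan is to lower bound $\P'_i[\mathcal{C}_{ab}]$ by restricting to a tractable sub-event that combines a locus-tree structure condition with a coalescent merging condition. Observe that $\mathcal{C}_{ab}$ is strictly stronger than $\{i_a = i_b\}$: the latter asserts only that $a, b$ share an ancestral locus lineage at $R$ (a feature of the locus tree alone), whereas $\mathcal{C}_{ab}$ requires in addition that the Kingman coalescent actually merges the two lineages before time $R$. Lemma~\ref{LemmaLeg1Copies} supplies the locus-tree factor immediately: $\P'_i[i_a = i_b] \geq 1/i$. The remaining task is to bound the conditional probability of Kingman merging below $\gamma \wedge 1/8$.

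Conditioning on $\{i_a = i_b\}$, I would analyze the coalescent along the locus tree beneath $R$, focusing on the species tree edge from $S_{AB}$ (the MRCA of $A$ and $B$) to its parent in the species tree. By the definition of $f$ as the shortest species tree branch length, this edge has length at least $f$ in coalescent time units, and any merging of the coalescent lineages of $a, b$ within this portion of the locus tree implies $\mathcal{C}_{ab}$. I would then condition further on the locus tree structure in the pipe above $S_{AB}$, in particular on the number $M$ of locus tree lineages (descending from the shared ancestral locus lineage of $a, b$) present in this pipe. In the benign regime $M$ small—most importantly $M = 1$, so that the two coalescent lineages are confined to a single pipe of length $\geq f$—the Kingman coalescent gives a merging probability of at least $\gamma = 1 - e^{-f}$. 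The main obstacle is the regime where $M$ is large: duplications between $R$ and $S_{AB}$ distribute the chosen copies across many lineages and can dilute the probability that the specific pair $(a, b)$ coalesces within time $f$.

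To control the large-$M$ case and extract the universal constant $1/8$, I would exploit the uniform random choice of $a$ and $b$ (which, as in the proof of Lemma~\ref{LemmaLeg1Copies}, places them in lineages proportional to descendant counts) together with the exchangeability of the Kingman coalescent to identify a favorable sub-event with probability bounded below by a constant independent of $M$—for instance, the event that the first coalescent event in the pipe above $S_{AB}$ is the merger of the pair $(a, b)$, or that their two ancestral lineages are forced to share the daughter edge of some duplication and therefore coalesce by the constraint of the b-MSC. Combining this conditional coalescent bound of $\gamma \wedge 1/8$ with the structural bound $\P'_i[i_a = i_b] \geq 1/i$ from Lemma~\ref{LemmaLeg1Copies} yields the claim
\[
    \P'_i[\mathcal{C}_{ab}] \;\geq\; \left(\gamma \wedge \frac{1}{8}\right)\cdot\frac{1}{i}.
\]
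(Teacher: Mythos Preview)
Your overall structure matches the paper's proof: condition on the common ancestral locus $\ell$ at $R$, split on the number $N_\ell$ of locus lineages descending from $\ell$ at $R' = S_{AB}$, get $\gamma$ when $N_\ell = 1$, and get a constant via the daughter-edge constraint when $N_\ell \geq 2$. Your factorization $\P'_i[\mathcal{C}_{ab}] = \P'_i[\mathcal{C}_{ab}\mid i_a=i_b]\,\P'_i[i_a=i_b]$ together with Lemma~\ref{LemmaLeg1Copies} is equivalent to the paper's computation $\P'_i[\mathcal{C}_{ab}] = \E'_i\big[\sum_\ell (N_\ell/J)^2 \cdot (\text{coal.\ prob.\ given }i_a=i_b=\ell)\big]$.

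The gap is in the $M \geq 2$ case. Of your two suggested sub-events, the first --- ``the first coalescent event in the pipe above $S_{AB}$ is the merger of $(a,b)$'' --- does not yield a constant: when $a$ and $b$ land in distinct locus lineages at $R'$ (which is the typical situation for large $M$), they cannot coalesce at all until those locus lineages merge at a duplication, so no exchangeability argument on Kingman pairs will save you. Only your second suggestion is viable, and the paper makes it precise as follows: since $N_\ell \geq 2$, there is a first duplication on the path below $\ell$; because the daughter/mother labels are assigned uniformly, with probability at least $1/2$ the daughter side carries at least half of the $N_\ell$ descendants at $R'$; conditional on $i_a=i_b=\ell$, each of $a,b$ then lies below that daughter edge with probability at least $1/2$, hence both do with probability at least $1/4$; on that event the b-MSC forces coalescence below $R$. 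This gives $1/2 \cdot 1/4 = 1/8$.
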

\begin{proof}
Similarly to the proof of Lemma~\ref{LemmaLeg1Copies}, for copy $\ell$ at $R$, let $N_\ell$ be the number of its descendant copies at $R'$, the most recent common ancestor of $A$ and $B$, and let $J = \sum_{\ell=1}^i N_\ell$. We consider two cases for $N_\ell$:
\begin{enumerate}
    \item In the case $N_\ell = 1$ and $\{i_a = i_b = \ell\}$, let $Y_{\ell}$ be the indicator function of the event that the lineages from $a$ and $b$ coalesce before $R$.  So $Y_\ell = 1$ if the lineages from $a$ and $b$ coalesce before $R$, and $0$ otherwise. Under the standard coalescent, the probability of that coalescent event is at least $\gamma$. Here, we are working under the bounded coalescent. In the case that a daughter edge is ancestral to the lineages from $a$ and $b$, the additional conditioning on complete coalescence only increases the probability that $a$ and $b$ coalesce before $R$. So $\gamma$ remains a lower bound.
    
    \item In the case $N_\ell \geq 2$ and $\{i_a = i_b = \ell\}$, let $Z_{\ell}$ be the indicator function of the event that the lineages from $a$ and $b$ coalesce before $R$.  So $Z_\ell = 1$ if the lineages from $a$ and $b$ coalesce before $R$, and $0$ otherwise. Since  $N_\ell \geq 2$, there is at least one duplication below $\ell$ before $R'$. By symmetry, there is probability at least $1/2$ that the first duplication produces a daughter edge with at least half of the descendants of $\ell$ below it at $R'$. Under $\{i_a = i_b = \ell\}$, there is then a probability at least $1/4$ that $a$ and $b$ descend from copies at $R'$ below that daughter edge. So overall there is probability at least $1/8$ that $Z_\ell = 1$ in that case.
\end{enumerate}

Putting these two cases together , we get
\begin{align*}
    \P'_i[\mathcal{C}_{ab}]
    &= \E'_i\left[\E'_i\left[
    \sum_{\ell : N_\ell = 1} \left(\frac{N_\ell}{J}\right)^2 Y_\ell
    + 
    \sum_{\ell : N_\ell > 1} \left(\frac{N_\ell}{J}\right)^2 Z_\ell
    \,\middle|\,(N_\ell)_{\ell=1}^i\right]\right]\\
    &= \E'_i\left[
    \sum_{\ell : N_\ell = 1} \left(\frac{N_\ell}{J}\right)^2 \E'_i\left[Y_\ell\,\middle|\,N_\ell\right]
    + 
    \sum_{\ell : N_\ell > 1} \left(\frac{N_\ell}{J}\right)^2 \E'_i\left[Z_\ell\,\middle|\,N_\ell\right]
    \right]\\
    &\geq \left\{
    \gamma \land \frac{1}{8}
    \right\} \E'_i\left[
    \sum_{\ell : N_\ell = 1} \left(\frac{N_\ell}{J}\right)^2 
    + 
    \sum_{\ell : N_\ell > 1} \left(\frac{N_\ell}{J}\right)^2 
    \right]\\
    &\geq \left\{
    \gamma \land \frac{1}{8}
    \right\} \frac{1}{i},
\end{align*}
as in \cite[Lemma 1]{Legried821439}, proving the claim.
\end{proof}
\begin{lemma}
\label{lemma:deltap}
We have
\begin{align*}
    \delta'
    \geq
    \frac{2}{3}
    \left\{
    \gamma \land \frac{1}{8}
    \right\}
    \frac{\sigma^3}{\alpha^3}.
\end{align*}
\end{lemma}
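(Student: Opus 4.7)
The plan is to combine the per-$i$ bounds from~\eqref{eq:k-cabBal}--\eqref{eq:k-cabCat} and Lemma~\ref{lemma:cab} with an averaging step over $I$, then translate the $\alpha$- and $\sigma$-bounds into control on the relevant moment of $I$. Substituting Lemma~\ref{lemma:cab} into~\eqref{eq:k-cabBal} (balanced case) or the weaker version~\eqref{eq:k-cabCat} (caterpillar case) yields the uniform lower bound
\begin{align*}
    \P'_i[Q_1] - \P'_i[Q_2] \geq \left\{\gamma \land \tfrac{1}{8}\right\} \frac{1}{i^3}
\end{align*}
for every $i \geq 1$. Taking expectation over $I$ under $\P'$ and inserting the result into the identity~\eqref{eq:k-deltap} reduces the lemma to establishing the moment bound $\E'[1/I^3] \geq \sigma^3/\alpha^3$.

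For that step I would invoke Jensen's inequality. The conditioning event $\{\mathcal{X}_\mathcal{Q} \geq \vec{1}\}$ forces at least one lineage to pass through $R$ (the MRCA of $\mathcal{Q}$ in the balanced case, of $\{A,B,C\}$ in the caterpillar case), so $I \geq 1$ almost surely under $\P'$. Since $x \mapsto 1/x^3$ is convex on $(0,\infty)$, Jensen gives $\E'[1/I^3] \geq 1/\E'[I]^3$, and it then suffices to show $\E'[I] \leq \alpha/\sigma$.

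The latter bound follows from the elementary inequality $\E[I] \geq \P[\mathcal{X}_\mathcal{Q} \geq \vec{1}]\,\E[I \mid \mathcal{X}_\mathcal{Q} \geq \vec{1}]$: the conditional expectation on the right equals $\E'[I]$, the prefactor is at least $\sigma$ by definition, and $\E[I] \leq \alpha$ because $R$ is a vertex of the species tree. Assembling the three pieces through~\eqref{eq:k-deltap} yields $\delta' \geq \tfrac{2}{3}\{\gamma \land 1/8\}\,\sigma^3/\alpha^3$ as claimed. I do not anticipate a serious obstacle here since all the real work is already done in Lemmas~\ref{lemma:sigma-bound}--\ref{lemma:cab} and in the derivation of~\eqref{eq:k-cabBal}--\eqref{eq:k-cabCat}; the only choice to make is to use the coarser bound $1/i^3$ (rather than the slightly sharper $1/i^2$ available in the caterpillar case) so that one inequality covers both root geometries uniformly and the subsequent Jensen step applies to a single convex function.
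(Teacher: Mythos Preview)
Your proposal is correct and follows essentially the same argument as the paper: combine the per-$i$ bounds with Lemma~\ref{lemma:cab} to obtain $\P'_i[Q_1]-\P'_i[Q_2]\geq\{\gamma\land 1/8\}/i^3$, average over $I$, apply Jensen to $x\mapsto 1/x^3$, and then bound $\E'[I]\leq \alpha/\sigma$ via $\alpha\geq \E[I]\geq \E'[I]\,\sigma$. The only cosmetic difference is that you justify $I\geq 1$ under $\P'$ explicitly before invoking Jensen, which the paper leaves implicit.
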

\begin{proof}
By~\eqref{eq:k-deltap},~\eqref{eq:k-cabBal},~\eqref{eq:k-cabCat}, and Lemma~\ref{lemma:cab},
\begin{align}
\delta'
&= \frac{2}{3}\left(\P'[Q_1] - \P'[Q_2]\right)\nonumber\\
&\geq \frac{2}{3}\left\{
    \gamma \land \frac{1}{8}
    \right\} \E'\left[\frac{1}{I^3}\right]\nonumber\\
    &\geq \frac{2}{3}\left\{
    \gamma \land \frac{1}{8}
    \right\} \frac{1}{\E'\left[I\right]^3},\label{eq:k-jensen}
\end{align}
where the last line follows from Jensen's inequality. Moreover
\begin{align*}
    \alpha &\geq \E[I]\\
    &= \E\left[I\middle|\mathcal{X}_\mathcal{Q} \geq \vec{1}\right] \,\P\left[\mathcal{X}_\mathcal{Q} \geq \vec{1}\right]
    + \E\left[I\middle|\mathcal{X}_\mathcal{Q} < \vec{1}\right] \,\P\left[\mathcal{X}_\mathcal{Q} < \vec{1}\right]\\
    &\geq \E'[I]\,\sigma.
\end{align*}
Plugging back into~\eqref{eq:k-jensen} gives the claim.
\end{proof}

\subsection{Final analysis}

\begin{proof}[Proof of Theorem~\ref{TheoremSampleONE}]
The following, adapted from \cite[Lemmas A.1 and A.2]{Shekhar_2018}, gives a bound on the $k^*$ required to reconstruct the correct species tree with probability $1-\epsilon$ in terms of $\delta'$
\begin{align*}
    k^* > 2 \log\left(\frac{n^4}{\epsilon}\right) \frac{1}{(\delta')^2}.
\end{align*}
In particular, for $\epsilon <1$ this inequality holds whenever 
\begin{align*}
    k^* > 8 \log\left(\frac{n}{\epsilon}\right) \frac{1}{(\delta')^2}.
\end{align*}
By Lemmas~\ref{lemma:kstar}
and~\ref{lemma:deltap}, it suffices to have
\begin{align*}
    k 
    &\geq \left\{\frac{16}{\sigma} \log\left(\frac{n}{\epsilon}\right) \frac{1}{(\frac{2}{3}
    \left\{
    \gamma \land \frac{1}{8}
    \right\}
    \frac{\sigma^3}{\alpha^3})^2} \right\}
    \lor
    \left\{\frac{8}{\sigma^2} \log \frac{n}{\epsilon}\right\}\\
    &\geq \frac{2304 \alpha^6}{\sigma^7 \gamma^2} \log \frac{n}{\epsilon}.
\end{align*}
The claim follows from Lemmas~\ref{lemma:sigma-bound} and~\ref{lemma:alpha-bound}.
\end{proof}

\section{Concluding remarks}

Through a probabilistic analysis of the DLCoal model, we established identifiability of the model species tree and statistical consistency of quartet-based species tree estimation methods ASTRAL-one and ASTRAL-multi.  In our main new result, we derived an upper bound on the required number of gene trees to reconstruct the species tree with high probability.
In particular, we highlighted the roles of the branching process parameters $\lambda$ and $\mu$ as well as the tree depth $\Delta$.  These parameters enter naturally through two relevant quantities: the minimum survival probability of a quartet ($\sigma$) and the maximum expected number of lineages at a vertex ($\alpha$). 

Our results suggest many open problems.  First, can we derive a lower bound (and matching upper bound) on the required number of gene trees for ASTRAL-one and similar methods (including ASTRAL-multi)?  In particular, is our dependence on $\alpha$ (in the supercritical regime) and $\sigma$ (in the subcritical regime) optimal? An improvement on the polynomial dependence on $\alpha$ (see Lemma~\ref{lemma:deltap}) is likely possible with a more detailed analysis of the events in Section~\ref{section:first-step}.  But, perhaps more importantly, are there alternative ways of processing multi-labeled gene trees (not necessarily quartet-based) that dampen or even exclude the effect of $\alpha$?

More generally, it would be interesting to obtain statistical consistency and sample complexity results for models also including LGT, under which at low enough rates quartet-based methods have also been shown to be consistent~\cite{roch12lateral}.  One such more general model was recently introduced in \cite{Li+:20}.

\section*{Acknowledgments}

SR was supported by NSF grants DMS-1614242, CCF-1740707 (TRIPODS), DMS-1902892, and DMS-1916378, as well as a Simons Fellowship and a Vilas Associates Award. BL was supported by NSF grants DMS-1614242, CCF-1740707 (TRIPODS), DMS-1902892 and a Vilas Associates Award (to SR).
MB was supported by NSF grant DMS-1902892 and a Vilas Associates Award (to SR).

\bibliographystyle{alpha}
\bibliography{GDLILS}

\newcommand{\etalchar}[1]{$^{#1}$}
\begin{thebibliography}{LMWR19}

\bibitem[ABR19]{allman2019nanuq}
Elizabeth~S. Allman, Hector Ba{\~n}os, and John~A. Rhodes.
\newblock Nanuq: a method for inferring species networks from gene trees under
  the coalescent model.
\newblock {\em Algorithms for Molecular Biology}, 14(1):24, 2019.

\bibitem[ADR11]{allman2011identifying}
Elizabeth~S Allman, James~H Degnan, and John~A Rhodes.
\newblock Identifying the rooted species tree from the distribution of unrooted
  gene trees under the coalescent.
\newblock {\em Journal of Mathematical Biology}, 62(6):833--862, 2011.

\bibitem[AHR17]{ane2017ornstein}
C\'{e}cile An\'{e}, Lam Si~Tung Ho, and Sebastien Roch.
\newblock Phase transition on the convergence rate of parameter estimation
  under an {O}rnstein-{U}hlenbeck diffusion on a tree.
\newblock {\em J. Math. Biol.}, 74(1-2):355--385, 2017.

\bibitem[ALR19]{allman2019logdet}
E.~Allman, C.~Long, and J.~Rhodes.
\newblock Species tree inference from genomic sequences using the log-det
  distance.
\newblock {\em SIAM Journal on Applied Algebra and Geometry}, 3(1):107--127,
  2019.

\bibitem[ALS09]{ArLaSe:09}
Lars Arvestad, Jens Lagergren, and Bengt Sennblad.
\newblock The gene evolution model and computing its associated probabilities.
\newblock {\em Journal of the ACM}, 56(2):7, 2009.

\bibitem[AN72]{AtheryaNey:72}
K.~B. Athreya and P.~E. Ney.
\newblock {\em Branching processes}.
\newblock Springer-Verlag, New York, 1972.
\newblock Die Grundlehren der mathematischen Wissenschaften, Band 196.

\bibitem[BCMR06]{BoChMoRo:06}
Christian Borgs, Jennifer~T. Chayes, Elchanan Mossel, and S{\'e}bastien Roch.
\newblock The {K}esten-{S}tigum reconstruction bound is tight for roughly
  symmetric binary channels.
\newblock In {\em FOCS}, pages 518--530, 2006.

\bibitem[Deg18]{degnan2018hybridization}
James~H Degnan.
\newblock {Modeling Hybridization Under the Network Multispecies Coalescent}.
\newblock {\em Systematic Biology}, 67(5):786--799, 05 2018.

\bibitem[DHN19]{DHN-biorxiv2019}
Peng Du, Matthew~W Hahn, and Luay Nakhleh.
\newblock Species tree inference under the multispecies coalescent on data with
  paralogs is accurate.
\newblock {\em bioRxiv}, 2019.

\bibitem[DMR11]{daskalakis2011bethe}
Constantinos Daskalakis, Elchanan Mossel, and S\'{e}bastien Roch.
\newblock Evolutionary trees and the {I}sing model on the {B}ethe lattice: a
  proof of {S}teel's conjecture.
\newblock {\em Probab. Theory Related Fields}, 149(1-2):149--189, 2011.

\bibitem[DNR15]{dasarathy2015data}
G.~Dasarathy, R.~Nowak, and S.~Roch.
\newblock Data requirement for phylogenetic inference from multiple loci: A new
  distance method.
\newblock {\em Computational Biology and Bioinformatics, IEEE/ACM Transactions
  on}, 12(2):422--432, March 2015.

\bibitem[DR06]{degnan2006}
J.~H. Degnan and N.~A. Rosenberg.
\newblock {{D}iscordance of species trees with their most likely gene trees}.
\newblock {\em PLoS Genet.}, 2(5):e68, May 2006.

\bibitem[DR07]{drummond2007beast}
Alexei~J. Drummond and Andrew Rambaut.
\newblock Beast: Bayesian evolutionary analysis by sampling trees.
\newblock {\em BMC Evolutionary Biology}, 7(1):214, 2007.

\bibitem[DR09]{DegnanRosenberg:09}
James~H. Degnan and Noah~A. Rosenberg.
\newblock Gene tree discordance, phylogenetic inference and the multispecies
  coalescent.
\newblock {\em Trends in Ecology and Evolution}, 24(6):332 -- 340, 2009.

\bibitem[Fel03]{felsenstein2003inferring}
J.~Felsenstein.
\newblock {\em Inferring Phylogenies}.
\newblock Sinauer, 2003.

\bibitem[FR18]{fan2018necessary}
Wai-Tong Fan and Sebastien Roch.
\newblock Necessary and sufficient conditions for consistent root
  reconstruction in {M}arkov models on trees.
\newblock {\em Electron. J. Probab.}, 23:Paper No. 47, 24, 2018.

\bibitem[Gal07]{Galtier:07}
Nicolas Galtier.
\newblock A model of horizontal gene transfer and the bacterial phylogeny
  problem.
\newblock {\em Systematic Biology}, 56(4):633--642, 2007.

\bibitem[Gas05]{gascuel2005mathematics}
O.~Gascuel.
\newblock {\em Mathematics of Evolution and Phylogeny}.
\newblock OUP Oxford, 2005.

\bibitem[GZ19]{ganesh2019optimal}
Arun Ganesh and Qiuyi~(Richard) Zhang.
\newblock Optimal sequence length requirements for phylogenetic tree
  reconstruction with indels.
\newblock In {\em Proceedings of the 51st Annual ACM SIGACT Symposium on Theory
  of Computing}, STOC 2019, page 721–732, New York, NY, USA, 2019.
  Association for Computing Machinery.

\bibitem[Kin82]{kingman1982coalescent}
John Frank~Charles Kingman.
\newblock The coalescent.
\newblock {\em Stochastic processes and their applications}, 13(3):235--248,
  1982.

\bibitem[LGSC20]{Li+:20}
Qiuyi Li, Nicolas Galtier, Celine Scornavacca, and Yao-Ban Chan.
\newblock The multilocus multispecies coalescent: A flexible new model of gene
  family evolution.
\newblock {\em bioRxiv}, 2020.

\bibitem[LKDA10]{larget2010bucky}
Bret~R Larget, Satish~K Kotha, Colin~N Dewey, and C{\'e}cile An{\'e}.
\newblock {BUCKy: Gene Tree/Species Tree Reconciliation with Bayesian
  Concordance Analysis}.
\newblock {\em {Bioinformatics}}, 26(22):2910--2911, 2010.

\bibitem[LMWR19]{Legried821439}
Brandon Legried, Erin~K. Molloy, Tandy Warnow, and S{\'e}bastien Roch.
\newblock Polynomial-time statistical estimation of species trees under gene
  duplication and loss.
\newblock {\em bioRxiv}, 2019.

\bibitem[LRvH07]{linz2007horizontal}
Simone Linz, Achim Radtke, and Arndt von Haeseler.
\newblock {A Likelihood Framework to Measure Horizontal Gene Transfer}.
\newblock {\em Molecular Biology and Evolution}, 24(6):1312--1319, 03 2007.

\bibitem[Mad97]{Maddison1997}
Wayne Maddison.
\newblock {Gene Trees in Species Trees}.
\newblock {\em {Systematic Biology}}, 46(3):523--536, 1997.

\bibitem[ME20]{eulenstein2020unified}
Alexey Markin and Oliver Eulenstein.
\newblock Quartet-based inference methods are statistically consistent under
  the unified duplication-loss-coalescence model.
\newblock Preprint available at https://arxiv.org/abs/2004.04299, 2020.

\bibitem[MHR13]{mihaescu2013learning}
Radu Mihaescu, Cameron Hill, and Satish Rao.
\newblock Fast phylogeny reconstruction through learning of ancestral
  sequences.
\newblock {\em Algorithmica}, 66(2):419--449, 2013.

\bibitem[MK09]{MENG200935}
Chen Meng and Laura~Salter Kubatko.
\newblock Detecting hybrid speciation in the presence of incomplete lineage
  sorting using gene tree incongruence: A model.
\newblock {\em Theoretical Population Biology}, 75(1):35 -- 45, 2009.

\bibitem[Mos03]{Mossel:03}
E.~Mossel.
\newblock On the impossibility of reconstructing ancestral data and
  phylogenies.
\newblock {\em J. Comput. Biol.}, 10(5):669--678, 2003.

\bibitem[Mos04a]{Mossel:04a}
E.~Mossel.
\newblock Phase transitions in phylogeny.
\newblock {\em Trans. Amer. Math. Soc.}, 356(6):2379--2404, 2004.

\bibitem[Mos04b]{Mossel:04}
E.~Mossel.
\newblock Survey: Information flow on trees.
\newblock In J.~Nestril and P.~Winkler, editors, {\em Graphs, morphisms and
  statistical physics}, pages 155--170. Amer. Math. Soc., 2004.

\bibitem[MP03]{mossel2003information}
Elchanan Mossel and Yuval Peres.
\newblock Information flow on trees.
\newblock {\em The Annals of Applied Probability}, 13(3):817--844, 2003.

\bibitem[MR12]{mossel2012mixtures}
Elchanan Mossel and Sebastien Roch.
\newblock Phylogenetic mixtures: concentration of measure in the large-tree
  limit.
\newblock {\em Ann. Appl. Probab.}, 22(6):2429--2459, 2012.

\bibitem[MR17]{mossel2017tradeoff}
Elchanan Mossel and Sebastien Roch.
\newblock Distance-based species tree estimation under the coalescent:
  information-theoretic trade-off between number of loci and sequence length.
\newblock {\em Ann. Appl. Probab.}, 27(5):2926--2955, 2017.

\bibitem[MRB{\etalchar{+}}14]{astral}
S.~Mirarab, R.~Reaz, Md.~S. Bayzid, T.~Zimmermann, M.~S. Swenson, and
  T.~Warnow.
\newblock {ASTRAL: genome-scale coalescent-based species tree estimation}.
\newblock {\em Bioinformatics}, 30(17):i541--i548, 2014.

\bibitem[MRS11]{mossel2011long}
Elchanan Mossel, S\'{e}bastien Roch, and Allan Sly.
\newblock On the inference of large phylogenies with long branches: how long is
  too long?
\newblock {\em Bull. Math. Biol.}, 73(7):1627--1644, 2011.

\bibitem[MS04]{steel2004cluster}
Elchanan Mossel and Mike Steel.
\newblock A phase transition for a random cluster model on phylogenetic trees.
\newblock {\em Mathematical Biosciences}, 187(2):189--203, 2004.

\bibitem[MS07]{MatsenSteel:07}
Frederick~A. Matsen and Mike Steel.
\newblock Phylogenetic mixtures on a single tree can mimic a tree of another
  topology.
\newblock {\em Systematic Biology}, 56(5):767--775, 2007.

\bibitem[Nak13]{nakhleh2013computational}
Luay Nakhleh.
\newblock Computational approaches to species phylogeny inference and gene tree
  reconciliation.
\newblock {\em Trends in ecology \& evolution}, 28(12):719--728, 2013.

\bibitem[RELY20]{rannala:hal-02535622}
Bruce Rannala, Scott V. S.~V. Edwards, Adam Leach{\'e}, and Ziheng Yang.
\newblock {The Multi-species Coalescent Model and Species Tree Inference}.
\newblock In Celine Scornavacca, Fr{\'e}d{\'e}ric Delsuc, and Nicolas Galtier,
  editors, {\em {Phylogenetics in the Genomic Era}}, pages 3.3:1--3.3:21. {No
  commercial publisher | Authors open access book}, 2020.

\bibitem[RK12]{rasmussen2012unified}
M.~D. Rasmussen and M.~Kellis.
\newblock {Unified modeling of gene duplication, loss, and coalescence using a
  locus tree}.
\newblock {\em Genome Research}, 22(4):755--765, 2012.

\bibitem[RNW18]{longbranch2019}
Sebastien Roch, Michael Nute, and Tandy Warnow.
\newblock {Long-Branch Attraction in Species Tree Estimation: Inconsistency of
  Partitioned Likelihood and Topology-Based Summary Methods}.
\newblock {\em Systematic Biology}, 68(2):281--297, 09 2018.

\bibitem[RNW19]{RoNuWa:19}
Sebastien Roch, Michael Nute, and Tandy Warnow.
\newblock {Long-Branch Attraction in Species Tree Estimation: Inconsistency of
  Partitioned Likelihood and Topology-Based Summary Methods}.
\newblock {\em Systematic Biology}, 68(2):281--297, 03 2019.

\bibitem[Roc10]{roch2010toward}
Sebastien Roch.
\newblock Toward extracting all phylogenetic information from matrices of
  evolutionary distances.
\newblock {\em Science}, 327(5971):1376--1379, 2010.

\bibitem[Roc19]{roch2019handson}
S{\'e}bastien Roch.
\newblock {\em Hands-on Introduction to Sequence-Length Requirements in
  Phylogenetics}, pages 47--86.
\newblock Springer International Publishing, Cham, 2019.

\bibitem[RS13]{roch12lateral}
Sebastien Roch and Sagi Snir.
\newblock Recovering the treelike trend of evolution despite extensive lateral
  genetic transfer: A probabilistic analysis.
\newblock {\em Journal of Computational Biology}, 20(2):93--112, 2015/06/08
  2013.

\bibitem[RS15]{RochSteel:15}
Sebastien Roch and Mike Steel.
\newblock Likelihood-based tree reconstruction on a concatenation of aligned
  sequence data sets can be statistically inconsistent.
\newblock {\em Theoretical Population Biology}, 100:56 -- 62, 2015.

\bibitem[RS17]{roch2017likelihood}
Sebastien Roch and Allan Sly.
\newblock Phase transition in the sample complexity of likelihood-based
  phylogeny inference.
\newblock {\em Probab. Theory Related Fields}, 169(1-2):3--62, 2017.

\bibitem[RSM19]{rabiee2019multi}
Maryam Rabiee, Erfan Sayyari, and Siavash Mirarab.
\newblock Multi-allele species reconstruction using {ASTRAL}.
\newblock {\em Molecular Phylogenetics and Evolution}, 130:286--296, 2019.

\bibitem[RW15]{RochWarnow:15}
Sebastien Roch and Tandy Warnow.
\newblock On the robustness to gene tree estimation error (or lack thereof) of
  coalescent-based species tree methods.
\newblock {\em Systematic Biology}, 64(4):663--676, 2015.

\bibitem[RY03]{rannala2003bayes}
Bruce Rannala and Ziheng Yang.
\newblock Bayes estimation of species divergence times and ancestral population
  sizes using dna sequences from multiple loci.
\newblock {\em Genetics}, 164(4):1645--1656, 2003.

\bibitem[SDG20]{scornavacca:hal-02535070}
Celine Scornavacca, Fr{\'e}d{\'e}ric Delsuc, and Nicolas Galtier.
\newblock {\em {Phylogenetics in the Genomic Era}}.
\newblock {No commercial publisher | Authors open access book}, 2020.

\bibitem[SDP20]{simion:hal-02535366}
Paul Simion, Fr{\'e}d{\'e}ric Delsuc, and Herve Philippe.
\newblock {To What Extent Current Limits of Phylogenomics Can Be Overcome?}
\newblock In Celine Scornavacca, Fr{\'e}d{\'e}ric Delsuc, and Nicolas Galtier,
  editors, {\em {Phylogenetics in the Genomic Era}}, pages 2.1:1--2.1:34. {No
  commercial publisher | Authors open access book}, 2020.

\bibitem[SLA16]{solislemus2016networks}
Claudia Solís-Lemus and Cécile Ané.
\newblock Inferring phylogenetic networks with maximum pseudolikelihood under
  incomplete lineage sorting.
\newblock {\em PLOS Genetics}, 12(3):1--21, 03 2016.

\bibitem[SRM18]{Shekhar_2018}
Shubhanshu Shekhar, Sebastien Roch, and Siavash Mirarab.
\newblock Species tree estimation using astral: How many genes are enough?
\newblock {\em IEEE/ACM Transactions on Computational Biology and
  Bioinformatics}, 15(5):1738–1747, Sep 2018.

\bibitem[SS03]{SempleSteel:03}
C.~Semple and M.~Steel.
\newblock {\em Phylogenetics}, volume~22 of {\em Mathematics and its
  Applications series}.
\newblock Oxford University Press, 2003.

\bibitem[SS20]{schrempf:hal-02535482}
Dominik Schrempf and Gergely Sz{\"o}ll{\"o}si.
\newblock {The Sources of Phylogenetic Conflicts}.
\newblock In Celine Scornavacca, Fr{\'e}d{\'e}ric Delsuc, and Nicolas Galtier,
  editors, {\em {Phylogenetics in the Genomic Era}}, pages 3.1:1--3.1:23. {No
  commercial publisher | Authors open access book}, 2020.

\bibitem[Ste16]{steelbook2016}
Mike Steel.
\newblock {\em Phylogeny: Discrete and Random Processes in Evolution}.
\newblock SIAM-Society for Industrial and Applied Mathematics, Philadelphia,
  PA, USA, 2016.

\bibitem[Ver18]{Vershynin:2018}
Roman Vershynin.
\newblock {\em High-dimensional probability}, volume~47 of {\em Cambridge
  Series in Statistical and Probabilistic Mathematics}.
\newblock Cambridge University Press, Cambridge, 2018.
\newblock An introduction with applications in data science, With a foreword by
  Sara van de Geer.

\bibitem[War17]{warnow2017book}
Tandy Warnow.
\newblock {\em Computational Phylogenetics: An Introduction to Designing
  Methods for Phylogeny Estimation}.
\newblock Cambridge University Press, 2017.

\bibitem[Yan14]{yang2014molecular}
Z.~Yang.
\newblock {\em Molecular Evolution: A Statistical Approach}.
\newblock OUP Oxford, 2014.

\end{thebibliography}

\end{document}